\newcounter{thm}[section]
\newtheorem{theorem}[thm]{Theorem}
\newtheorem{corollary}[thm]{Corollary}
\newtheorem{proposition}[thm]{Proposition}
\newtheorem{lemma}[thm]{Lemma}
\newtheorem*{lemma*}{Lemma}
\newtheorem{claim}[thm]{Claim}
\newtheorem*{claim*}{Claim}
\theoremstyle{definition}
\newtheorem{example}[thm]{Example}
\newtheorem*{notation}{Notation}
\newcommand{\eps}{\varepsilon}
\DeclareMathOperator{\Var}{Var}
\DeclarePairedDelimiter\abs{\lvert}{\rvert}
\newcommand{\NN}{\mathbb{N}}
\newcommand{\RR}{\mathbb{R}}
\newcommand{\PP}{\mathbb{P}}
\newcommand{\E}{\mathbb{E}}
\begin{document}

\title[Markov Chains for Integer Complexity]{An Application of Markov Chain Analysis \\to Integer Complexity}
\author{Christopher E. Shriver}
\subjclass[2010]{11A67, 11B75, 60J10}
\keywords{Integer complexity, Markov Chains}
\address{UCLA Department of Mathematics, Los Angeles, CA}
\email{cshriver@math.ucla.edu}

\begin{abstract} The complexity $f(n)$ of an integer was introduced in 1953 by Mahler \& Popken: it is defined as the smallest number of $1$'s needed in conjunction with arbitrarily many +, * and parentheses to write an integer $n$ (for example, $f(6) \leq 5$ since $6 = (1+1)(1+1+1)$). The best known bounds are
$$ 3 \log_{3}{n} \leq f(n) \leq  3.635 \log_{3}{n}.$$
The lower bound is due to Selfridge (with equality for powers of 3); the upper bound was recently proven by Arias de Reyna \& Van de Lune, and holds on a set of natural density one. 

We use Markov chain methods to analyze a large class of algorithms, including one found by David Bevan that improves the upper bound to
$$ f(n) \leq  3.52 \log_{3}{n}$$
on a set of logarithmic density one.
\end{abstract}
\maketitle

\section{Introduction}
What is the minimum number of 1's needed to write an integer $n$, using only $+$, $*$ and parentheses? In this paper we denote by $f(n)$ the minimum number of ones needed to express $n$ in this manner.  For example, $f(6) = 5$ since $6$ can be written with five ones as $(1+1)(1+1+1)$, but not using four or fewer. Note that $f(11) \ne 2$; concatenation is not allowed. The problem dates back to a 1953 paper
of Mahler \& Popken \cite{mahler}. Guy popularized the problem in his survey \cite{guy1} and also included it in his book \textit{Unsolved Problems in Number Theory} \cite{guy2}. 
Recently, there has been renewed interest \cite{alt, alt2, alt1, reyna, latvia1, latvia2, steiner} in the problem -- the focus of our paper is on the asymptotic size of $f(n)$.

An interesting expression for $f(n)$ given by Guy in \cite{guy1} is
	\[ f(n) = \min_{\substack{ d | n \\ 2 \leq d \leq \sqrt{n} \\ 1 \leq e \leq n/2}} \big\{ f(d) + f(n/d),\  f(e) + f(n-e) \big\}, \]
which shows that the problem can be seen as arising from the difficulty of mixing additive and multiplicative behavior.
Several initial results are given in Guy's article \cite{guy1}. One of these, due to John Selfridge, is a lower bound of
	\[ f(n) \geq 3 \log_3 n, \]
with equality exactly when $n$ is a power of 3. As the author is not aware of any published proof of this fact, one is given below in Appendix \ref{sec:guyproof}.
Guy also describes several ways of obtaining an upper bound. A simple upper bound, resulting from expanding $n$ in base 2 and using Horner's scheme, yields
	\[ f(n) \leq 3 \log_2 n < 4.755 \log_3 n. \]
Unconditional improvement seems unlikely, since there exist numbers requiring an atypically large number of 1's to be represent: for example (taken from \cite{latvia1}),
$$ f(1439) = 3.928 \log_{3}{(1439)}.$$
Much recent focus has been on improvements for subsets of density 1. Returning to Guy's original argument, we see that a `typical' number $n \in \mathbb{N}$ should
have roughly the same number of 0's and 1's in its binary expansion, which suggests the upper bound 
	\[ f(n) \approx \frac{5}{2} \log_2 n < 3.963 \log_3 n \]
in the `generic' case (that is, numbers for which the proportion of 0's and 1's in the binary expansion is not too atypical). It is not difficult to show that the `generic' case gives rise to a subset of density 1 on which the bound holds. Guy also cites improvements by Isbell, the best of which is achieved by performing the same type of analysis in base 24, giving
	\[ f(n) < 3.817 \log_3 n \]
on a set of density 1.

The current best result is due to Fuller, Arias de Reyna, and Van de Lune \cite{reyna} and is based on similar considerations carried out in base $3359232.$

\begin{theorem}[Fuller, Arias de Reyna, and Van de Lune, 2014]
The set of natural numbers $n$ for which
	\[ f(n) \leq 3.635 \log_3 n \]
has density 1. 

\end{theorem}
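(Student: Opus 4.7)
The plan is to extend the base-$2$ Horner-scheme heuristic sketched in the introduction to the much larger base $b = 3359232$ flagged in the theorem, and to upgrade that heuristic into a rigorous density-$1$ statement via a concentration argument on the base-$b$ digits of a uniformly random integer.

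First I would write a typical $n$ in base $b$ as $n = a_k b^k + a_{k-1} b^{k-1} + \cdots + a_0$ with $k = \lfloor \log_b n \rfloor$ and each $a_i \in \{0, 1, \ldots, b-1\}$. Applying Horner's scheme to this expression, together with the trivial subadditivity and submultiplicativity of $f$, gives
$$ f(n) \;\leq\; k \cdot f(b) \;+\; \sum_{i \,:\, a_i \neq 0} f(a_i). $$
Since $k = \log_3 n / \log_3 b + O(1)$, this reduces the theorem to controlling the average digit complexity.

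Next I would let $\bar{f}_b := \frac{1}{b}\sum_{d=0}^{b-1} f(d)$. For $n$ drawn uniformly from $\{1, \ldots, N\}$, the digit vector $(a_0, \ldots, a_{k-1})$ is essentially i.i.d.\ uniform on $\{0, \ldots, b-1\}$ (the only non-uniformity lives in the leading digit, whose contribution to $\sum_i f(a_i)$ is $O(1)$). Using the crude pointwise bound $f(d) = O(\log d)$, a Hoeffding-type concentration inequality then implies that for every $\eps > 0$ the set of $n \leq N$ with $\sum_i f(a_i) > k(\bar{f}_b + \eps)$ has size $o(N)$. On the complementary density-$1$ set,
$$ f(n) \;\leq\; \frac{f(b) + \bar{f}_b + \eps}{\log_3 b}\, \log_3 n \;+\; O(1), $$
so it suffices to verify that $(f(b) + \bar{f}_b)/\log_3 b < 3.635$ for the specified $b$.

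The remaining piece is numerical: each $f(d)$ for $d \leq b$ can be tabulated by dynamic programming on Guy's recursion $f(n) = \min_{d \mid n,\, 2 \leq d \leq \sqrt{n}}(f(d) + f(n/d)) \,\wedge\, \min_{1 \leq e \leq n/2}(f(e) + f(n-e))$, after which both $f(b)$ and $\bar{f}_b$ can be evaluated. The main obstacle I would expect is computational rather than analytic: one must build an efficient tabulation of $f$ up to $b \approx 3.36 \times 10^{6}$, and a priori must search over bases to identify one for which the ratio dips below $3.635$ --- the value $b = 3359232$ in the statement is presumably the output of such an optimization. The concentration step itself is a textbook argument that immediately yields the density-$1$ conclusion once the ratio is verified.
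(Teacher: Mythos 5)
First, a remark on scope: the paper does not prove this theorem; it is quoted from Arias de Reyna et al.\ \cite{reyna} as background, so there is no internal proof to compare against. Evaluating your proposal on its own terms, it has a genuine gap at the Horner step.

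The inequality $f(n) \leq k\, f(b) + \sum_{i:\,a_i \neq 0} f(a_i)$ is correct, but it is far too weak once $b$ is large, because each nonzero digit $a_i$ is itself a number of size up to $b$, and you are paying its full optimal complexity $f(a_i) \gtrsim 3\log_3 a_i$ at every level. Your proposed constant is $(f(b) + \bar{f}_b)/\log_3 b$. For $b = 2^9 3^8 = 3359232$ one has $f(b) = 42$, $\log_3 b \approx 13.68$, and already from Selfridge's lower bound $\bar{f}_b \geq \frac{3}{b}\sum_{d<b}\log_3 d \approx 3(\log_3 b - 1/\ln 3) \approx 38.3$, so the constant is at least $\approx 5.87$ (empirically closer to $6.4$) --- worse even than the trivial base-$2$ bound of $4.755$. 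Increasing $b$ makes this naive bound worse, not better; no choice of base can dip below $3.635$, so the numerical verification you defer to at the end cannot succeed. The actual method (Isbell's base-$24$ argument, refined by Fuller, Arias de Reyna and Van de Lune to base $2^9 3^8$) does not add a whole digit $a_i$ in one piece after multiplying by $b$. It factors $b$ as a chain of seventeen multiplications by $2$ or $3$ and \emph{interleaves} the sub-digits of $a_i$ --- its digits in the associated mixed-radix system, each lying in $\{0,1\}$ or $\{0,1,2\}$ and hence costing at most $2$ ones --- between successive multiplications, optimizing over the ordering of the prime factors. This replaces the $f(a_i)$ term (average $\approx 46$) by a term whose average is only a dozen or so ones. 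Your concentration argument and the reduction to a per-digit average are fine and do carry over, but they must be applied to this interleaved digit cost, not to $f(a_i)$; as written the key estimate fails by a wide margin.
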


Experimental results suggest that this can be improved: Iraids, Balodis, \u{C}er\c{n}enoks, Opmanis, Opmanis, and Podnieks \cite{latvia2} have conjectured, based on extensive numerical computation, that
	\[ \limsup_{n \to \infty} \frac{f(n)}{\log_3 n} \leq 3.37. \]

We emphasize that although there are many other problems related to integer complexity, our focus will be on the asymptotic size. We refer to the bibliography for further details.

\section{Main results}
\subsection{New upper bound.} The main subject of this paper is the analysis of algorithms generalizing the one used by Steinerberger \cite{steiner}.
The algorithm used in \cite{steiner} is a refinement of the methods discussed above and consists of a simultaneous consideration of numbers in bases 2 and 3. More precisely,
the algorithm proceeds as follows:\\

\textbf{Algorithm} (Greedy, base 6)\textbf{.}

\begin{enumerate}
	\item Take an arbitrary natural number $n$. If $n \leq 5$, use one of the optimal representations
	$1 = 1$, 
	$2 = 1 + 1$, 
	$3 = 1 + 1 + 1$, 
	$4 = 1 + 1 + 1 + 1$, or 
	$5 = 1 + 1 + 1 + 1 + 1$. Otherwise, move to step 2.
	\item Choose a representation depending on $n \pmod{6}$:
	\begin{itemize}
		\setlength{\itemsep}{0cm}
		\setlength{\parskip}{0cm}
	    	\item If $n \pmod{6} \equiv 0$, write as $3[n/3]$.
    		\item If $n \pmod{6} \equiv 1$, write as $3\big[(n-1)/3\big] + 1$.
 	   	\item If $n \pmod{6} \equiv 2$, write as $2[n/2]$.
 	   	\item If $n \pmod{6} \equiv 3$, write as $3[n/3]$.
 	   	\item If $n \pmod{6} \equiv 4$, write as $2[n/2]$.
 	   	\item If $n \pmod{6} \equiv 5$, write as $2\big[(n-1)/2\big]+1$.
	\end{itemize}
	Then apply step 2 to the result (the number in brackets) until one of the representations in step 1 can be applied.
\item Replace every 2 by (1+1) and every 3 by (1+1+1).
\end{enumerate}
Steinerberger proposed an argument that
	\[ f(n) \leq 3.66 \log_3 n \qquad \mbox{for `generic' $n$.} \]
	
This notion of `generic' is weaker than that of a density one subsequence. A previous version of this paper expanded Steinerberger's methods to substantially improve the result using the same notion of `generic,' but Juan Arias de Reyna and David Bevan pointed out a flaw in a preprint which was also present in Steinerberger's original paper \cite{steiner}.

The current version corrects the error and strengthens the claimed result, showing that the class of algorithms produces bounds that hold on sets of logarithmic density one. David Bevan also discovered a slightly better algorithm than the one included in the earlier preprint, giving the following:

\begin{theorem}[Main result]
\label{maintheorem}
The set of natural numbers $n$ such that
	\[ f(n) < 3.52 \log_3 n \]
has logarithmic density one.
\end{theorem}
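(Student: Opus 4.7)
The plan is to analyze a family of algorithms (including David Bevan's) that iteratively rewrite a natural number $n$ via a representation $n = d \cdot [(n-\rho)/d] + \rho$ with $d \in \{2, 3\}$ and $\rho \in \{0, 1\}$ determined by $n \bmod M$, for some fixed modulus $M$. Any such algorithm produces a trajectory $n = n_0 > n_1 > \cdots > n_K$, where $n_K$ lies in a small base case handled trivially, together with a cost $c(n_i \bmod M) \in \{2, 3, 4\}$ counting the ones used at step $i$. The total gives
\[ f(n) \leq \sum_{i=0}^{K-1} c(n_i \bmod M) + O(1), \qquad \log n = \sum_{i=0}^{K-1} \log d(n_i \bmod M) + O(1), \]
so the ratio $f(n)/\log_3 n$ reduces to a ratio of time averages along the sequence of residues $(n_i \bmod M)_{i \geq 0}$.

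The first step is to show that this deterministic sequence of residues is well-modeled by a Markov chain on $\ZZ/M\ZZ$. Given $r = n_i \bmod M$, the rule prescribes $d(r)$ and $\rho(r)$, but the next residue $r' = (n_i - \rho)/d \bmod M$ in fact depends on $n_i \bmod dM$; viewing the lift of $r$ to $\ZZ/(dM)\ZZ$ as uniform produces a stochastic transition matrix $P$ on $\ZZ/M\ZZ$. After checking irreducibility of $P$ (which must be verified for Bevan's specific rule), I would extract its stationary distribution $\pi$ and define
\[ \bar c = \E_\pi\!\left[c(r)\right], \qquad \bar \ell = \E_\pi\!\left[\log d(r)\right]. \]
A direct numerical computation for Bevan's algorithm should then yield $\bar c / (\bar\ell \log 3) < 3.52$, which is the target ratio.

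The key technical obstacle is upgrading this Markov-chain average to a statement about individual $n$, in the sense of logarithmic density. Logarithmic density is natural here because each step of the algorithm removes a bounded amount from $\log n$, so the logarithmic measure $d\mu(n) = (1/n)\,dn$ on $[1, N]$ pushes forward under one iteration of the algorithm to something again approximately logarithmic on a slightly smaller range, and $n \bmod M$ is asymptotically uniform under $\mu$. Iterating, the joint distribution of $(n_0 \bmod M, \ldots, n_k \bmod M)$ under $\mu$ converges to the distribution of $k$ steps of the Markov chain started from uniform, with a discrepancy that remains controlled as long as $k \ll \log N/\log M$. Making this quantitative, so that the discrepancies do not compound uncontrollably over the $\log N$ iterations, is the main technical hurdle.

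Once equidistribution is in hand, I would apply a Hoeffding- or Azuma-type concentration bound for additive functionals of an irreducible finite Markov chain to conclude that
\[ \frac{1}{K}\sum_{i=0}^{K-1} c(n_i \bmod M) \to \bar c, \qquad \frac{1}{K}\sum_{i=0}^{K-1} \log d(n_i \bmod M) \to \bar\ell \]
hold for $n$ in a set of logarithmic measure tending to $1$. Combined with the numerical estimate $\bar c / (\bar\ell \log 3) < 3.52$ for Bevan's algorithm, this gives $f(n) < 3.52 \log_3 n$ on a set of logarithmic density one, as desired. I expect the subtle parts to be (i) the quantitative equidistribution that allows propagation through $\Theta(\log N)$ iterations, and (ii) verifying the numerical bound explicitly from the transition data of Bevan's chain.
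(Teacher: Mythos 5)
Your proposal shares the overall strategy---model the algorithm's residue trajectory as a Markov chain, compute the stationary expectation of cost over stationary expectation of log-divisor, then upgrade to a logarithmic-density statement---but the step you flag as ``the main technical hurdle'' is handled very differently by the paper, and in your form it is a genuine gap.

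You propose to push the logarithmic measure $d\mu(n) = dn/n$ forward through $\Theta(\log N)$ iterations of the algorithm and show that the joint distribution of residues converges to that of a Markov-chain trajectory, with discrepancies that ``do not compound uncontrollably.'' You offer no mechanism for controlling this compounding, and it is delicate: the residue $n_{i+1} \bmod M$ is determined by $n_i$ modulo a larger modulus, so one must track a growing hierarchy of moduli and show that the conditional distribution on lifts stays close to uniform through all $\Theta(\log N)$ steps. That is a two-sided equidistribution statement with an inductive error term, and it is not obvious it holds with the required uniformity.

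The paper does something cleaner and one-sided. It parameterizes natural numbers by walks on the directed graph $G(A)$ of residue transitions (Proposition \ref{prop:bij}), and then builds a random walk $\Psi^k$ on $G(A)$ of length $k+1$, starting uniformly and conditioned to end at the vertex $1$. For all but a bounded final segment of length $m$, $\Psi^k$ is a genuine homogeneous Markov chain, so the variance machinery of Meyn--Tweedie applies with no equidistribution argument at all (Proposition \ref{prop:variance}), giving $\PP\big[f_A(n) > (\E_\pi[C]/\E_\pi[\log_3\Lambda] + \eps)\log_3 n\big] < \delta/k$ via Chebyshev. The logarithmic-density step is then a direct domination: if $b \in A_k$ has walk $R(b) = \{r_0,\ldots,r_k\}$ with factors $\lambda_i$, then $b \geq \lambda_0\cdots\lambda_{k-1}$ while $\PP[N(\Psi^k)=b] \geq \frac{1}{a\lambda_0\cdots\lambda_{k-1}}$, so $1/b \leq a\cdot\PP[N(\Psi^k)=b]$. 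Summing this over the exceptional set $B_k$ and over $k \leq m \approx \log_2 M$ converts the $\delta/k$ tail into a harmonic sum $a\delta H_m = o(\log M)$, finishing the proof. No two-sided equidistribution of the logarithmic measure under iteration is ever needed; the argument only requires that the random-walk measure dominates the logarithmic measure up to a constant factor $a$.

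So your approach is structurally different in the density step and, as written, incomplete exactly where you say it is: if you pursue it you will likely end up re-deriving something equivalent to the paper's domination bound. One thing your sketch would buy, if an Azuma-type bound could be established for the pushed-forward measure, is exponential rather than $1/k$ tails, which might be relevant to upgrading logarithmic density to natural density---a question the paper leaves open. Also a small factual point: you restrict the divisor to $d \in \{2,3\}$, but Bevan's algorithm uses divisors as large as $55$; the general framework requires only $\Lambda(i) \mid a$ with $\Lambda(i) \geq 2$.
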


The logarithmic density of a set $B$ of natural numbers is defined by
	\[ \lim_{N \to \infty} \frac{1}{\log n}\sum_{\substack{ n \in B \\[0.1em] n \leq N}} \frac{1}{n}, \]
if the limit exists. This is very closely related to the natural density, which is slightly stronger: if the natural density exists, then so does the logarithmic density, and the two are equal. Having calculated the logarithmic density, it would therefore be sufficient to prove that the natural density exists in order to know its value.

The proof of the theorem uses a Markov chain model that allows us to study
more effective algorithms related to Steinerberger's, answering an open problem stated in \cite{steiner}.

Table \ref{tab:thmcomp} shows the improvement of the new algorithm over the base 6 greedy algorithm and how well it fits the bound of Theorem \ref{maintheorem} for a few `arbitrarily' chosen large numbers.

\begin{table}[h!]
\begin{center}
\begin{tabular}{|c||c|c|c|}
\hline
\multirow{2}{*}{$n$} & \multicolumn{2}{c|}{number of ones used to represent $n$ by} & \multirow{2}{*}{$\lfloor 3.52 \log_3 n \rfloor$} \\
\cline{2-3}
& base 6 greedy & algorithm of Theorem \ref{maintheorem}\rule{0cm}{1em} & \\
\hline
$\lfloor \pi \cdot 10^{100} \rfloor + 10^{1000}$ & 7588 & 7372 & 7377 \rule{0cm}{1.1em}\\[0.2em]
$\lfloor \sqrt{2} \cdot 10^{100} \rfloor + 10^{2000}$ & 15232 & 14718 & 14755 \\[0.2em]
$\lfloor e \cdot 10^{100} \rfloor + 10^{3000}$ & 22823 & 22083 & 22132 \\[0.2em]
\hline
\end{tabular}
\end{center}
\caption{Comparison of performance of current and previous best known algorithms of the type considered in this paper.}
\label{tab:thmcomp}
\end{table}%

\subsection{Limitations of these methods.}
It seems worthwhile to note that the only limit to the methods discussed above is of a computational nature:
If, for some constant $3 \leq c < \infty,$
$$ \limsup_{n \to \infty} \frac{f(n)}{\log_3 n} \leq c$$
then both Guy's basis method as well as the Markov chain method will be able to prove the upper bound $c+ \varepsilon$ for every $\varepsilon > 0$. This can be seen by using an estimate of the type
$$ \frac{1}{b} \sum_{n=1}^{b}{f(n)} \lesssim \frac{1}{b} \sum_{n=1}^{b}{c \log_{3}{n}} \sim c \log_{3}{b}$$
to guarantee that a typical `digit' in a base $b$ representation requires at most $c \log_{3}{b}$ times the digit 1's. At the same time, to represent an integer $N$, we need
$$ \log_b{N} = \frac{\log{N}}{\log{b}} = \frac{\log_{3}{N}}{\log_3{b}} \qquad \mbox{digits in base}~b$$
and therefore a total number of
$$  \frac{\log_{3}{N}}{\log_3{b}}  c \log_{3}{b} = c \log_{3}{N} \qquad \mbox{ones.}$$
This implies that the Guy basis algorithm will eventually be effective. However, since the Markov chain algorithm is a generalization of the Guy basis algorithm, it will also
be able to show the same bound, likely with less computational effort. More sophisticated optimization techniques may be able to make significant improvements.

\section{A Review of the Markov Chain Method}
\subsection{Idea.} This section is primarily dedicated to an exposition of Steinerberger's approach, which did not receive a detailed explanation in the original paper. It is based on the method of proving Guy's upper bound of $4.755 \log_3 n$ by writing
	\[ n = a_0 + (1+1) (a_1 + (1+1) (a_2 + ( \cdots (1+1)(a_k + 1 + 1) \cdots ))) \]
with $a_i \in \{0,1\}$ (this is the expression of $n$ in terms of its base 2 digits using Horner's scheme). To emphasize the similarity, we can construct this expression algorithmically as follows: Given a number $n$,
\begin{enumerate}
	\item If $n =1$, the optimal representation is trivial. Otherwise move to step 2.
	\item Calculate $n \bmod 2$, then:
	\begin{itemize}
		\item If $n \bmod{2} = 0$, write as $2 [n/2]$
		\item If $n \bmod{2} = 1$, write as $2 [(n-1)/2] + 1$.
	\end{itemize}
	Repeat this step on the result (i.e.~the number in brackets) until the result is 1.
\item Replace every 2 by (1+1).
\end{enumerate}
Steinerberger's improvement was to look at the residue of $n$ modulo 6 and, depending on the residue, divide by either 2 or 3. As an example, given a number of the form $6n +5$ one could write
\begin{align*}
6n + 5 &= 1 + 2(3n+2) = 1 + (1+1)(3n + 2) \\
6n + 5 &= 2 + 3(2n + 1) = (1+1)+(1+1+1)(2n+1).
\end{align*}
The first representation provides a number that is half the size of its input at the cost of 3 ones; the second gives a number that is a third of its input size at the cost of 5 ones. Clearly, we want to decrease the number by a large factor, but we want to avoid using more ones than necessary. We now discuss a way of comparing those two options to decide on which may be favorable.

\subsection{Measuring inefficiency}
\label{sec:heuristic} This section describes the heuristic used in \cite{steiner}. We wish to emphasize that this heuristic will \textit{not} always lead to optimal results (this answers a question posed at the end of \cite{steiner}).
Suppose that we write $n$ as
	\[ n = \lambda \cdot \left[\frac{n-r}{\lambda}\right] + r \]
at a cost of $m$ ones. The inefficiency of this representation is defined to be
	\[ I = m - 3 \log_3 \lambda. \]
We remark that this quantity is never negative:
the cost $m$ is the number of ones it takes to write $b$ and $r$, so
	\[ m = f(\lambda) + f(r) \geq f(\lambda) \geq 3 \log_3 \lambda \]
and therefore
	\[ I \geq 0. \]

\subsection{The algorithm}
\label{sec:steineralg}
The algorithm given in \cite{steiner} is now produced by, for each residue class modulo 6, calculating the inefficiency of the two representations arising from dividing by either 2 or 3 and using the representation with the smaller inefficiency.
Explicit values for the inefficiencies are given in Table \ref{table_mod6}.\\

\textit{Local-global relations.} It is natural to ask whether such a local greedy selection procedure can give rise to optimal algorithms. Fixing a greedy selection at every residue
class gives rise to a unique global algorithm -- the behavior of the global algorithm depends nonlinearly on local steps (i.e. greedy local steps may create an overall
global dynamics that stays away from `effective' residue classes). The question of whether optimal local steps necessarily imply optimal global steps was posed
as an open problem in \cite{steiner}. We will answer this question in the negative (see Section 5.2).

\begin{table}[h!]
\begin{center}
{
\begin{tabular}{|c||c|c|c||c|c|c|}
	\hline
	\multirow{2}{*}{$n \bmod 6$} & \multicolumn{3}{|c||}{divide by 2} & \multicolumn{3}{|c|}{divide by 3} \\
	\cline{2-7}
	& representation & cost & inefficiency & representation & cost & inefficiency \\
	\hline
	0 & $2 \Big(\frac{n}{2}\Big)$		& 2 & 0.107 & $3 \Big(\frac{n}{3} \Big)$		& 3 & 0 \rule{0pt}{1.5em}\\[0.7em]
	1 & $2 \Big(\frac{n-1}{2}\Big) + 1$	& 3 & 1.107 & $3 \Big(\frac{n-1}{3} \Big) + 1$	& 4 & 1 \\[0.7em]
	2 & $2 \Big(\frac{n}{2}\Big)$		& 2 & 0.107 & $3 \Big(\frac{n-2}{3} \Big) + 2$	& 5 & 2 \\[0.7em]
	3 & $2 \Big(\frac{n-1}{2}\Big) + 1$	& 3 & 1.107 & $3 \Big(\frac{n}{3} \Big)$		& 3 & 0 \\[0.7em]
	4 & $2 \Big(\frac{n}{2}\Big)$		& 2 & 0.107 & $3 \Big(\frac{n-1}{3} \Big) + 1$ 	& 4 & 1 \\[0.7em]
	5 & $2 \Big(\frac{n-1}{2}\Big) + 1$	& 3 & 1.107 & $3 \Big(\frac{n-2}{3} \Big) + 2$	& 5 & 2 \\[0.7em]
	\hline
\end{tabular}
}
\end{center}
\caption{Calculation of inefficiencies for each residue class mod 6}
\label{table_mod6}
\end{table}

\subsection{Markov chain motivation} The key step in the analysis of an algorithm of such a type is to interpret its large-scale behavior as a Markov chain.
A Markov chain is a sequence of random variables $\mathbf{\Phi} = \{ \Phi_0, \Phi_1, \Phi_2, \ldots \}$ on a measurable space $X$, such that the distribution of $\Phi_i$ is dependent only on the distribution of $\Phi_{i-1}$. For our purposes, $X$ will be finite and the dependence will be described by a matrix $M$ via the relation
	\[ \pi_i = M \pi_{i-1} \]
where $\pi_i$ is the vector giving the probability distribution of $\Phi_i$ (that is, $\pi_i(j)$ is the probability that $\Phi_i = j$). We denote by $P^m(x,A)$ the probability that $x$ lands in the set $A$ after $m$ iterations:
	\[ P^m(x,A) := \PP \big[ \Phi_{i+m} \in A \ | \ \Phi_i = x \big]. \]

To model the algorithm described above, we let $X = \{0,1,2,3,4,5\}$ be the set of residues modulo 6. The algorithm naturally provides a sequence of states in $X$: given an integer of the form $6n + 1$, for example, the algorithm proceeds
by writing
$$ 6n +1 = 1 + (1+1+1)(2n).$$
A number of the form $2n$ has residue class $0, 2$ or $4$ when re-interpreted modulo 6. This means that the state 1 can be followed by 0, 2 or 4. We can similarly calculate possible successors of each state in $X$.
Possible paths are represented by the directed graph in Figure \ref{fig:graph}.\\

\begin{figure}[h!]
\includegraphics[width=0.5\textwidth]{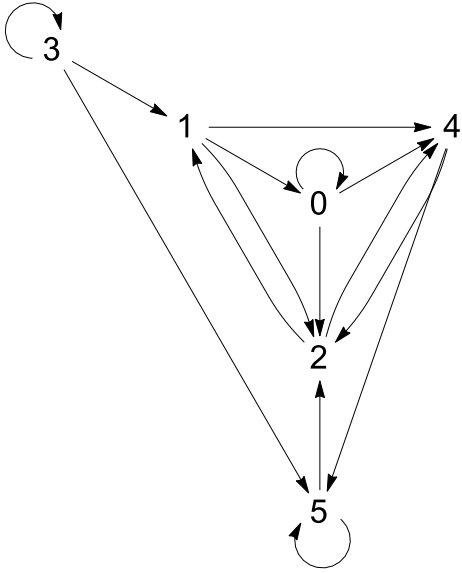}
\caption{Directed graph representing the possible paths of the algorithm through residue classes mod 6.}
\label{fig:graph}
\end{figure}

Since we are considering walks on a graph it is natural to seek a Markov chain model, but it is not immediately clear how to choose transition probabilities. This difficulty, which led to an error in Steinerberger's paper \cite{steiner} and a previous version of the current paper, will be addressed in the following discussion.

\section{Markov Chain Analysis}

\subsection{Asymptotic variance.} We start by citing a theorem that allows us to control the typical variance of functions of sample paths.

\begin{lemma}
\label{lem:clt}
	Let $\mathbf{\Phi}$ be a Markov chain on a finite space $X$. If there exists a state $i \in X$ and some $m \in \NN$ such that $i$ can be reached in exactly $m$ steps from any starting point with positive probability, that is,
$$P^m(x,i) > 0 \quad \mbox{ for all} \quad x \in X,$$
then any starting distribution approaches the unique stationary distribution $\pi$ at a geometric rate.

Moreover, we have the following: let $g:X \to \RR$ be bounded and 
	\[ S_n(g) = \sum_{k=1}^n g(\Phi_k). \]
Then if we let $\bar{g} = g - \int g\, d\pi$ where $\pi$ is the limit distribution, there exists a constant $0 \leq \gamma_g^2 < \infty$ such that
	\[ \lim_{n \to \infty} \Var \left[ \frac{1}{\sqrt{n}} S_n(\bar{g}) \right] = \gamma_g^2. \]

\end{lemma}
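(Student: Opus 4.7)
The plan is to derive both parts from a Doeblin-type contraction supplied by the hypothesis. Since $X$ is finite, $\delta := \min_{x \in X} P^m(x, i) > 0$, and a standard coupling argument (at any two starting points, merge a mass $\delta$ of probability at state $i$ after $m$ steps, then couple freely afterward) gives
$$\|\mu M^m - \nu M^m\|_{TV} \leq (1 - \delta)\|\mu - \nu\|_{TV}$$
for all probability distributions $\mu, \nu$ on $X$. The Banach fixed point theorem applied to $M^m$ on the (compact) simplex yields a unique fixed point $\pi$, which is also fixed by $M$ since $M\pi$ is again a fixed point of $M^m$. Iterating the contraction gives $\|\mu M^n - \pi\|_{TV} \leq C\rho^n$ at rate $\rho := (1-\delta)^{1/m} < 1$ for any initial $\mu$, proving the first claim.

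For the variance, I would expand
$$\Var\Big[\tfrac{1}{\sqrt n}\,S_n(\bar g)\Big] = \frac{1}{n}\sum_{j,k=1}^{n}\text{Cov}\bigl(\bar g(\Phi_j), \bar g(\Phi_k)\bigr),$$
and prove the uniform bound $\bigl|\text{Cov}(\bar g(\Phi_j), \bar g(\Phi_k))\bigr| \leq C\rho^{|k-j|}$ by rewriting the covariance in terms of the time-$j$ marginal $\pi_j$ and the function $M^{k-j}\bar g$, then combining two consequences of geometric ergodicity: $\|\pi_j - \pi\|_{TV} \leq C\rho^j$, and, because $\int \bar g\,d\pi = 0$, the iterates satisfy $\|M^{\ell}\bar g\|_\infty \leq C\rho^{\ell}$. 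The product of these two bounds plus the boundedness of $\bar g$ yields the claimed geometric decay.

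With this bound, regrouping by $\ell := k - j$ rewrites the double sum as $\sum_{|\ell| < n}\bigl(1 - |\ell|/n\bigr) A_n(\ell)$, where $A_n(\ell)$ is the average of covariances along the $\ell$-diagonal. Truncating at a cutoff $L$ controls the tail $|\ell| \geq L$ by $O(\rho^L)$ uniformly in $n$; for each fixed $\ell$, the convergence $\pi_j \to \pi$ forces $A_n(\ell) \to \text{Cov}_\pi\bigl(\bar g(\Phi_0), \bar g(\Phi_\ell)\bigr)$. Sending $n \to \infty$, then $L \to \infty$, and invoking dominated convergence yields
$$\gamma_g^2 = \Var_\pi[\bar g(\Phi_0)] + 2\sum_{\ell=1}^{\infty}\text{Cov}_\pi\bigl(\bar g(\Phi_0), \bar g(\Phi_\ell)\bigr),$$
with absolute convergence guaranteed by the geometric bound. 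The main obstacle is the bookkeeping required to handle a non-stationary starting distribution: every covariance carries a non-stationarity correction, and one must verify that these corrections are uniformly summable and vanish after averaging — this is exactly the role played by the geometric decay of $\|\pi_j - \pi\|_{TV}$ established in the first part.
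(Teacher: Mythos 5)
Your argument is correct, but it takes a genuinely different and more self-contained route than the paper. The paper's proof verifies the hypotheses of two black-box results from Meyn and Tweedie \cite{meyn}: using the same minorization $P^m(x,i) \geq l > 0$, it shows the whole state space $X$ is a $\nu$-small set, invokes Theorem~16.0.2 of \cite{meyn} to conclude uniform ergodicity, and then reads off both the geometric convergence and the variance limit from Theorem~17.0.1.ii of \cite{meyn} (after observing that uniform ergodicity is $V$-uniform ergodicity with $V$ constant). You instead prove both conclusions from first principles: the Doeblin coupling yields the total-variation contraction $\|\mu M^m - \nu M^m\|_{TV} \leq (1-\delta)\|\mu - \nu\|_{TV}$ and hence geometric ergodicity directly, and then the variance limit is obtained by hand via the covariance bound $|\mathrm{Cov}(\bar g(\Phi_j),\bar g(\Phi_k))| \leq C\rho^{|k-j|}$ (which, as you note, hinges on $\int \bar g\,d\pi = 0$ so that $M^\ell \bar g$ decays), followed by a diagonal regrouping, truncation, and dominated convergence. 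A side benefit of your approach is that it produces the explicit Green--Kubo formula
\[
\gamma_g^2 \;=\; \Var_\pi[\bar g(\Phi_0)] \;+\; 2\sum_{\ell=1}^{\infty}\mathrm{Cov}_\pi\bigl(\bar g(\Phi_0),\bar g(\Phi_\ell)\bigr),
\]
which the paper's citation-based proof does not surface. What the paper's route buys is brevity and a pointer to a framework ($V$-uniform ergodicity) that generalizes well beyond finite state spaces; what yours buys is transparency, elementariness, and the closed-form expression for $\gamma_g^2$ --- all available here precisely because $X$ is finite and the minorization is uniform. Both are valid; yours is the better choice if one wants the exposition to be self-contained rather than delegated to a reference.
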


The proof involves checking a few criteria given in \cite{meyn}, which is done in Appendix \ref{pf:clt}. \\

For the remainder of this paper we will denote by $\E_\pi[g] \coloneqq \int g\, d\pi$ the expectation of a function $g$ under the distribution $\pi$. \\

This lemma will be used to show that a class of algorithms generalizing the one in Section \ref{sec:steineralg} can, with one additional condition, be used to bound the variance of the outcome of the algorithm. This bound will then be used to prove the main theorem. \\

\subsection{Generalized algorithms}

We consider algorithms $A$ defined by a map $\Lambda \colon \{0,1,\ldots,a-1\} \to \NN$ assigning dividing factors to residue classes mod $a$; we require that each factor $\Lambda(i)$ divides $a$ and that each factor is at least 2 (i.e. $\Lambda(i) \geq 2$ for all $0 \leq i \leq a-1$) to exclude the trivial step of dividing by 1. A map $\Lambda$ produces a representation for a natural number $n$ according to the following algorithm: \\

\begin{quote}
	\textbf{General algorithm.} Define $n_0 = n$. Let $r_0 = n_0 \bmod a$, $\lambda_0 = \Lambda(r_0)$, and $s_0 = n_0 \bmod \lambda_0$. We then have
		\[ n_0 = \lambda_0 \big[ (n_0 - s_0) / \lambda_0 \big] + s_0. \]
	Iteratively define $n_{i+1} = (n_i - s_i) / \lambda_i$, with $\lambda_i$ and $s_i$ defined similarly to $\lambda_0,s_0$ above, stopping when $n_k = 1$. Then we can write
		\[ n = \lambda_0 \big( \lambda_1 \big( \lambda_2 (\cdots (\lambda_{k-1} + s_{k-1}) \cdots) + s_2 \big) + s_1 \big) + s_0 \]
	and expressing the $\lambda_i$ and $s_i$ using their optimal representations gives a representation for $n$. (Note that $\lambda_i, s_i$ are natural numbers which are strictly less than $a$, so only finitely many optimal representations must be known to give this representation.)
\end{quote}
We say that $A$ is a base $a$ algorithm, since its action at each step depends on residue classes modulo $a$.

\begin{notation}
	We denote by ``$n \bmod a$'' the remainder upon dividing $n$ by $a$. Equivalence modulo $a$ will be denoted by $r \equiv n \pmod{a}$.
\end{notation}

\begin{example}
If we write $\Lambda$ as an $a$-tuple, Guy's algorithm would be represented as $(2, 2)$ and the greedy algorithm used by Steinerberger as $(3, 3, 2, 3, 2, 2)$. A greedy algorithm can be produced for any base $a$ using the heuristic in Section \ref{sec:heuristic} by choosing the dividing factor $\Lambda(i)$ that gives the lowest inefficiency for each $0 \leq i \leq a-1$.\\
\end{example}

\subsection{Representing natural numbers}
In order to analyze the behavior of an algorithm $A$, in this section we establish a precise connection between the natural numbers and a walks on directed graph $G(A)$ encoding the structure of $A$. \\

By considering which residue classes can be sent to which other residue classes by one step of the algorithm, we can construct a directed graph $G(A)$ with vertices $\{0, 1, 2, \ldots, a-1\}$ and with an edge $(r_0, r_1)$ exactly when there exists some $n_0 \equiv r_0 \pmod{a}$ such that one iteration of the algorithm applied to $n_0$ gives a number $n_1$ with $n_1 \equiv r_1 \pmod{a}$. For example, in Steinerberger's base 6 greedy algorithm, $9 = 3 \pmod{6}$ is sent to $9/3 = 3$, so there is an edge $(3,3)$.\\

Applying an algorithm $A$ to a number $n$ gives a sequence of residues, denoted by
	\[ R(n) \coloneqq \{r_0, r_1, \ldots, r_{k}\}, \]
with the $r_i$ as above (here and in the following discussion, the dependence on $A$ is implicit). Conversely, given a sequence of residues we can calculate $\lambda_i = \Lambda(r_i)$ and $s_i = n_i \bmod \lambda_i = r_i \bmod \lambda_i$ -- the last equality follows from the fact that each $\lambda_i$ divides $a$ and from the definition $r_i = n_i \bmod a$. Then we can produce a number using a representation similar to above:
	\[ N(\{r_0, r_1, \ldots, r_k\}) \coloneqq \lambda_0 \big( \lambda_1 \big( \lambda_2 (\cdots (\lambda_{k-1} + s_{k-1}) \cdots) + s_2 \big) + s_1 \big) + s_0. \]
It is clear from this definition that if $R(n)$ is the sequence of residues arising from applying the algorithm to $n$, then
	\[ n = N(R(n)) . \]
Hence $\NN$ is in bijection with the set $\{ R(n) \mid n \in \NN \}$. Notice that every $R(n)$, interpreted as a sequence of vertices of $G(A)$, gives a walk on the graph. The converse is also true:

\begin{claim}
\label{claim:R_onto}
	For every walk $\{v_0,v_1,\ldots,v_k\}$ on the graph $G(A)$ ending at the vertex $1$ (that is, $v_k=1)$, there exists some natural number $n$ such that $R(n) = \{v_0,v_1, \ldots, v_{k}\}$.
\end{claim}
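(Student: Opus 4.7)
The plan is to prove the claim by explicit construction, showing that $n := N(\{v_0,\ldots,v_k\})$ has $R(n) = \{v_0,\ldots,v_k\}$. To this end, set $n_k := 1$ and, for $i = k-1, k-2, \ldots, 0$, recursively define
\[ n_i := \lambda_i n_{i+1} + s_i, \qquad \lambda_i := \Lambda(v_i), \quad s_i := v_i \bmod \lambda_i. \]
Unwinding the recursion shows that $n_0$ coincides with the nested expression defining $N(\{v_0,\ldots,v_k\})$.

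The crux is a downward induction proving $n_i \equiv v_i \pmod{a}$ for every $i \in \{0,1,\ldots,k\}$. The base case $i = k$ is immediate, since $n_k = 1 = v_k$ and $a \geq 2$. For the inductive step I would first unpack the edge condition of $G(A)$: any $m \equiv v_i \pmod{a}$ can be written $m = qa + v_i$ for some integer $q \geq 0$, and one iteration of the algorithm sends $m$ to
\[ (m - s_i)/\lambda_i = q(a/\lambda_i) + (v_i - s_i)/\lambda_i, \]
whose reduction modulo $a$ ranges over exactly those residues congruent to $(v_i - s_i)/\lambda_i$ modulo $a/\lambda_i$. Hence the existence of the edge $(v_i, v_{i+1})$ in $G(A)$ is equivalent to the congruence
\[ \lambda_i v_{i+1} + s_i \equiv v_i \pmod{a}. \]
Combining this with the inductive hypothesis $n_{i+1} \equiv v_{i+1} \pmod{a}$ (which a fortiori gives $n_{i+1} \equiv v_{i+1} \pmod{a/\lambda_i}$, and hence $\lambda_i n_{i+1} \equiv \lambda_i v_{i+1} \pmod{a}$) yields $n_i \equiv v_i \pmod{a}$.

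It then remains a routine verification that the algorithm applied to $n_0$ reproduces the sequence $(n_i)$, and thus the residue sequence $\{v_0,\ldots,v_k\}$: at step $i$ the algorithm reads $r_i = n_i \bmod a = v_i$, uses $\Lambda(v_i) = \lambda_i$, computes $n_i \bmod \lambda_i = v_i \bmod \lambda_i = s_i$ (using $\lambda_i \mid a$), and obtains $(n_i - s_i)/\lambda_i = n_{i+1}$. The trivial bound $n_i \geq \lambda_i n_{i+1} \geq 2 n_{i+1}$ gives $n_i \geq 2^{k-i} > 1$ for every $i < k$, so the algorithm does not halt before step $k$. The only nontrivial step in the argument is the induction: the edge condition only provides congruence information modulo $a/\lambda_i$, and the divisibility $\lambda_i \mid a$ is essential for upgrading this to a congruence modulo $a$.
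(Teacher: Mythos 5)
Your proof is correct and uses essentially the same construction as the paper's: you build the candidate $n = N(\{v_0,\ldots,v_k\})$ by the backward recursion $n_i = \lambda_i n_{i+1} + s_i$ and use the edge characterization $\lambda_i v_{i+1} + s_i \equiv v_i \pmod{a}$ to establish $n_i \equiv v_i \pmod{a}$, which is precisely the content of the paper's auxiliary lemma. The paper organizes this as structural induction on the length of the walk, pulling $n_1$ from the inductive hypothesis and prepending one step, while you construct all the $n_i$ at once and verify the residues by a downward induction on $i$; these are logically the same argument. A minor strength of your version is that you explicitly check that the algorithm cannot halt prematurely via the bound $n_i \geq 2^{k-i} > 1$, a point the paper handles only implicitly through its inductive hypothesis.
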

\begin{proof}
	See Appendix \ref{sec:R_onto_pf}.
\end{proof}

Combining this result with the preceding discussion yields the following:

\begin{proposition}
\label{prop:bij}
	For any algorithm $A$, the functions $R$ and $N$ give a bijection between the natural numbers and walks on the graph $G(A)$ ending at the vertex $1$.
\end{proposition}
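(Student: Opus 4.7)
The plan is to treat the proposition as a synthesis of what has already been proved, with little new content required. By construction of $G(A)$, every sequence $R(n)$ is a walk on $G(A)$, and because the algorithm halts exactly when $n_k = 1$ we have $r_k = 1 \bmod a = 1$, so $R(n)$ ends at vertex $1$. Thus $R$ maps $\NN$ into the set $\mathcal{W}$ of walks on $G(A)$ terminating at vertex $1$, which is the target codomain in the statement.

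Next, the identity $n = N(R(n))$, noted in the paragraph immediately before the claim, amounts to $N \circ R = \mathrm{id}_{\NN}$. From this alone one immediately reads off two facts: $R$ is injective (if $R(n) = R(m)$, applying $N$ gives $n = m$), and every natural number lies in the image of $N$. For the remaining direction — surjectivity of $R$ onto $\mathcal{W}$ — I would invoke Claim \ref{claim:R_onto} directly. Combining injectivity and surjectivity shows $R \colon \NN \to \mathcal{W}$ is a bijection, whence $N$, being a left inverse of a bijection between two fixed sets, is forced to be the two-sided inverse and is likewise a bijection.

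The main obstacle is therefore not in the proposition itself but in Claim \ref{claim:R_onto}, whose proof is deferred to the appendix: one must, for an arbitrary walk on $G(A)$ terminating at $1$, explicitly produce a natural number whose algorithmic trajectory realizes that walk (essentially running the algorithm in reverse and verifying that each step is consistent with some choice of residue $s_i$). Once Claim \ref{claim:R_onto} is granted, Proposition \ref{prop:bij} reduces entirely to the elementary bookkeeping outlined above.
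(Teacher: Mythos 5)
Your argument is correct and mirrors the paper's: the identity $N \circ R = \mathrm{id}_{\NN}$ from the preceding discussion supplies injectivity of $R$, each $R(n)$ is visibly a walk on $G(A)$ ending at $1$, and Claim~\ref{claim:R_onto} supplies surjectivity, so $R$ is a bijection with $N$ as its inverse on walks ending at $1$. The paper leaves this synthesis implicit (no separate proof is given for the proposition), and your bookkeeping fills it in exactly as intended.
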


Using this connection, we will study the behavior of an algorithm $A$ by studying random walks on $G(A)$.

\subsection{Markov chain model}
Given an algorithm $A$, we define
	\[ A_k = \big\{ n \in \NN \ :\  \abs{R(n)} = k+1  \big\}. \]
These are the natural numbers which $A$ takes $k$ steps to process. We will choose a walk $\Psi^k$ of length $k+1$ ending at 1, and hence a random number $n = N(\Psi^k) \in A_k$, according to the following process:

Given a base $a$ algorithm $A$, let $\mathbf{\Phi} = \{\Phi_0, \Phi_1, \ldots\}$ be the Markov chain on the space $X = \{0,1,\ldots,a-1\}$ describing a random walk starting on a vertex of $G(A)$ chosen uniformly at random, and let $M$ be its transition matrix. For each $x \in X$, let $S_j(x)$ be the set of all possible successors of $x$ from which the state 1 can be reached in exactly $j$ steps. Suppose that there is some natural number $m$ such that, for any $n \geq m$, the state $1$ can be reached from any other state in exactly $n$ steps (the existence of such an $m$ is a straightforward consequence of the convergence of the random walk process to a stationary distribution $\pi$, provided $\pi(1) \ne 0$).

The walk $\Psi^k$ consists of a collection of random variables $\{ \Psi_0^k, \Psi_1^k, \ldots, \Psi_k^k\}$, defined as follows: Let $\Psi_0^k$ be given the uniform distribution on $X$. The rest of the process is defined inductively: for each $0 < i \leq k$ let $\Psi_i^k$ depend only on $\Psi_{i-1}^k$, with
	\[ \PP \big[ \Psi_i^k = x \ \mid\ \Psi_{i-1}^k  = y \big] =
				\left\{\begin{array}{ll}
					\big| S_{k - i}(y) \big|^{-1}, & x \in S_{k-i}(y) \\
					0, & \text{else.}
				\end{array}\right.\]  
	Note that $\Psi_k^k = 1$ with probability one, so that $\Psi^k$ is in fact a walk of length $k$ ending at 1. Also note that the random variables $\{ \Psi_0^k, \Psi_1^k, \ldots, \Psi_{k - m}^k\}$ are jointly equidistributed with the random variables $\{\Phi_0, \Phi_1, \ldots, \Phi_{k-m}\}$ associated with the random walk on $G(A)$. Hence, aside from a sequence of final steps of fixed length $m$, $\Psi^k$ is essentially a random walk on $G(A)$. In particular, it is close enough to being a random walk that we can apply Lemma \ref{lem:clt}:
	
\begin{proposition}
\label{prop:variance}
	Suppose an algorithm $A$ yields a graph $G(A)$ such that the state 1 can be reached from any other state in a fixed number of steps. Then there exists a natural number $m$ such that, for $n>m$, the state 1 can be reached from any other state in exactly $n$ steps.
	
	Furthermore: for $k>m$, let $\Psi^k = \{\Psi_0^k, \Psi_1^k, \ldots, \Psi_k^k\}$ be the collection of random variables defined above, giving a walk of length $k+1$ ending at 1. If $g \colon X \to \RR$ is a bounded function and we define
		\[ \tilde{S}_k(g) \coloneqq \sum_{i = 0}^{k-1} g(\Psi_i^k), \]
	then 
		\[ \lim_{k \to \infty} \Var \left[ \frac{1}{\sqrt{k}} \tilde{S}_k(\bar{g}) \right] = \gamma_g^2, \]
	where $\gamma_g^2$ is as in Lemma \ref{lem:clt}.
\end{proposition}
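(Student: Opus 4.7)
My plan is to prove the two claims of the proposition in turn, essentially as consequences of Lemma~\ref{lem:clt} together with the equidistribution observation built into the definition of $\Psi^k$.

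For the existence of $m$, let $N$ be the fixed number of steps granted by the hypothesis, so that $P^N(x,1) > 0$ for every $x \in X$. Because this is exactly the reachability condition appearing in Lemma~\ref{lem:clt}, the chain $\mathbf{\Phi}$ converges geometrically to a unique stationary distribution $\pi$. Some $\pi(x)$ must be strictly positive since $\pi$ is a probability distribution, and the invariance relation $\pi = \pi M^N$ then forces
\[ \pi(1) = \sum_{x \in X} \pi(x) P^N(x, 1) > 0. \]
Geometric convergence gives $P^n(x, 1) \to \pi(1) > 0$ for each $x \in X$; since $X$ is finite, I take $m$ to be the largest among the finitely many thresholds $n_x$ past which $P^{n_x}(x, 1)$ stays strictly positive.

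For the variance estimate I would first verify cleanly the equidistribution asserted just before the proposition. By definition $S_{k-i}(y)$ is the subset of successors $z$ of $y$ from which the state $1$ can be reached in exactly $k-i$ steps, so the first assertion guarantees that $S_{k-i}(y)$ equals the full set of successors of $y$ whenever $k - i \geq m$. For indices $1 \leq i \leq k - m$ the conditional transitions defining $\Psi^k$ therefore agree with those of the uniform random walk $\mathbf{\Phi}$; combined with both $\Psi_0^k$ and $\Phi_0$ being uniform on $X$, this gives the stated equality in joint law of $\{\Psi_0^k, \ldots, \Psi_{k-m}^k\}$ and $\{\Phi_0, \ldots, \Phi_{k-m}\}$. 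Splitting
\[ \tilde{S}_k(\bar g) = T_1 + T_2, \qquad T_1 = \sum_{i=0}^{k-m} \bar g(\Psi_i^k), \qquad T_2 = \sum_{i=k-m+1}^{k-1} \bar g(\Psi_i^k), \]
this equidistribution identifies $T_1$ in distribution with $\bar g(\Phi_0) + S_{k-m}(\bar g)$ in the notation of Lemma~\ref{lem:clt}.

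To conclude, Lemma~\ref{lem:clt} gives $\Var[S_{k-m}(\bar g)] = (k-m)\gamma_g^2 + o(k)$; adding the bounded summand $\bar g(\Phi_0)$ perturbs this by at most $O(\sqrt{k})$ via a Cauchy--Schwarz bound on the cross-covariance, so $\Var[T_1/\sqrt{k}] \to \gamma_g^2$. The boundary sum $T_2$ consists of at most $m - 1$ terms each bounded by $\|\bar g\|_\infty$, forcing $\Var(T_2/\sqrt{k}) = O(1/k)$. Expanding $\Var[(T_1 + T_2)/\sqrt{k}]$ and bounding the cross-term by $\sqrt{\Var(T_1/\sqrt{k})\,\Var(T_2/\sqrt{k})} \to 0$ yields the required limit. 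The substantive analytic content is packaged in Lemma~\ref{lem:clt}, so the main obstacle is really just the equidistribution check above; once it is in place the variance computation reduces to routine $L^2$ bookkeeping.
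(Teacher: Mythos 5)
Your argument is correct and follows the same route as the paper: both proofs reduce the variance statement to Lemma~\ref{lem:clt} by observing that $\Psi^k$ deviates from the stationary random walk $\mathbf\Phi$ only in the final $m$ steps (a fixed boundary), which is asymptotically negligible after the $1/\sqrt{k}$ normalization. The paper leaves this as a one-line remark, whereas you supply the missing details — the identification of $S_{k-i}(y)$ with the full successor set when $k-i \geq m$, the split into $T_1$ and $T_2$, and the Cauchy--Schwarz bound on the cross-terms — so your proposal is essentially an expanded version of the paper's proof.
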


\begin{proof}
Since the state 1 can be reached from any other in a fixed number of steps, we know that the random walk process is ergodic and in particular the state 1 is not transient. So the existence of such an $m$ is guaranteed.

The second part follows directly from Lemma \ref{lem:clt}, since the part of $\Psi^k$ which deviates from the random walk is insignificant for large $k$.
\end{proof}

\subsection{Application of Markov model}
Our goal is to bound $f(n)$ by estimating $f_A(n)$, the number of ones used by the algorithm $A$ to represent a randomly chosen $n = N(\Psi^k)$. We will estimate this using the ``average cost''
	\[ \bar{c} \coloneqq \frac{f_A(n)}{k}, \]
and the ``average dividing factor''
	\[ \bar{\lambda} \coloneqq n^{1/k}. \]
Note that $k$ is fixed and $n, \bar{c}, \bar{\lambda}$ are random variables.

With these definitions, we can write
	\[ \frac{f_A(n)}{\log_3 n} =  \frac{\bar{c}}{\log \bar{\lambda}} \log 3. \]
We will use Proposition \ref{prop:variance} to study $\bar{c}$ and $\log\bar{\lambda}$ when $k$ is large.
The average cost $\bar{c}$ can be written as
	\[ \bar{c} = \frac{1}{k} \sum_{i=0}^{k-1} C(\Psi_i^k) = \frac{1}{k}S_k(C) \]
where $C(i)$ is the number of ones used by one step of the algorithm applied to $i \in X$, that is, the total number of ones needed to represent $\Lambda(i)$ and $i \pmod{\Lambda(i)}$ in their optimal forms. Clearly $C$ is a bounded function, so we can apply Proposition \ref{prop:variance}. Note that
	\[ \frac{1}{k} S_k(\bar{C}) = \bar{c} - \E_\pi [C], \]
so by Chebyshev,
	\[ \PP \Big[ \big| \bar{c} - \E_\pi [C] \big| > \varepsilon \Big] \leq \frac{\Var(\frac{1}{\sqrt{k}}S_k(\bar{C}))}{k \varepsilon^2}. \]
Therefore if $\delta_1 > \gamma_{\bar{C}}^2/\varepsilon^2$ then, by Proposition \ref{prop:variance},
	\[ \PP \Big[ \big| \bar{c} - \E_\pi [C] \big| > \varepsilon \Big] \leq \frac{\delta_1}{k} \]
for large enough $k$.
\bigbreak
		
A similar approach gives an estimate of $\log\bar{\lambda}$ in terms of $n$: We can write $n$ as
\begin{align*}
	n &= \lambda_0 \big( \lambda_1 \big( \lambda_2 (\cdots (\lambda_{k-1} + s_{k-1}) \cdots) + s_2 \big) + s_1 \big) + s_0 \\[0.4em]
	 &= \lambda_0 \lambda_1 \cdots \lambda_{k-1} + \lambda_0 \cdots \lambda_{k-2} s_{k-1} + \ldots + \lambda_0 \lambda_1 s_2 + \lambda_0 s_1 + s_0 \\
	 &= \lambda_0 \lambda_1 \cdots \lambda_{k-1} \left( 1 + \frac{s_{k-1}}{\lambda_{k-1}} + \ldots + \frac{s_2}{\lambda_2 \cdots \lambda_{k-1}} + \frac{s_1}{\lambda_1 \cdots \lambda_{k-1}} + \frac{s_0}{\lambda_0 \cdots \lambda_{k-1}} \right).
\end{align*}
Then of course
\begin{equation}
\label{eqn:lowerbound}
	n \geq \lambda_0 \lambda_1 \cdots \lambda_{k-1},
\end{equation}
and since $\lambda_i \geq 2$ and $s_i \leq a-1$
	\[ n \leq \lambda_0 \lambda_1 \cdots \lambda_{k-1} \left( 1 + \frac{a-1}{2} + \frac{a-1}{2^2} + \ldots + \frac{a-1}{2^k} \right) <  \lambda_0 \lambda_1 \cdots \lambda_{k-1} \cdot a. \]
Taking logarithms and dividing by $k$, the last two inequalities combined give
	\[ \frac{1}{k} \sum_{i=0}^{k-1} \log \lambda_i \leq \frac{1}{k} \log n \leq \frac{1}{k} \sum_{i=0}^{k-1} \log \lambda_i + \frac{1}{k} \log a. \]
Therefore since $\log \bar\lambda = \frac{1}{k}\log n$, for large enough $k$
	\[ \log\bar{\lambda} - \frac{1}{k} \sum_{i=0}^{k-1} \log \lambda_i \]
can be made arbitrarily small for all $n \in A_k$.

But $\lambda_i = \Lambda(\Psi_i^k)$, so
	\[ \frac{1}{k} \sum_{i=0}^{k-1} \log \lambda_i = \frac{1}{k} S_k (\log\Lambda). \]
Just as before, $\Lambda$ is a bounded function on $X$ and
	\[ \frac{1}{k} S_k (\overline{\log\Lambda}) = \frac{1}{k} S_k (\log\Lambda) - \E_\pi [\log\Lambda], \]
so if $\delta_2 > \gamma_{\Lambda}^2/\varepsilon^2$ then
	\[ \PP \Big[\, \abs[\Big]{ \frac{1}{k}S_k (\log\Lambda) - \E_\pi [\log \Lambda] } > \varepsilon \Big] \leq \frac{\delta_2}{k} \]
for large enough $k$.
Thus for any $\eps > 0$ there exists $K$ so that for $k \geq K$
	\[ \abs[\big]{ \log\bar{\lambda} - \frac{1}{k} S_k (\log\Lambda) } < \frac{\varepsilon}{2} \quad \text{for all } n \in A_k \]
and
	\[ \PP \Big[ \,\abs[\Big]{ \frac{1}{k} S_k (\log \Lambda) - \E_\pi [\log\Lambda] } > \frac{\varepsilon}{2} \Big] < \frac{\delta_2}{k}, \]
so
	\[ \PP \Big[ \,\abs[\Big]{ \log\bar{\lambda} - \E_\pi [\log\Lambda] } > \varepsilon \Big] < \frac{\delta_2}{k}. \]
	
Recal that $\frac{f_A(n)}{\log_3 n} =  \frac{\bar{c}}{\log_3 \bar{\lambda}\rule{0pt}{0.7em}}$; by continuity, for any $\varepsilon>0$ there exist $\varepsilon_1, \varepsilon_2$ such that 
	\[ \abs[\Big]{ \frac{f_A(n)}{\log_3 n} - \frac{\E_\pi [C]}{\E_\pi [\log_3 \Lambda]} } < \varepsilon \]
whenever $\abs[\big]{\bar{c} - \E_\pi[C]} < \varepsilon_1$ and $\abs[\big]{ \log_3 \bar{\lambda} - \E_\pi [\log_3 \Lambda]} < \varepsilon_2$.

It follows that for any $\eps > 0$ there exists $K$ such that for $k \geq K$,
	\[ \PP \Big[ \,\abs[\Big]{ \frac{f_A(n)}{\log_3 n} - \frac{\E_\pi [C]}{\E_\pi [\log_3 \Lambda]} } > \varepsilon \Big] < \frac{\delta_1 + \delta_2}{k}. \]
In particular, if we let $\delta = \delta_1 + \delta_2$,
\begin{equation}
	\PP \Big[ f_A(n) > \Big( \frac{\E_\pi [C]}{\E_\pi [\log_3 \Lambda]} + \varepsilon \Big) \log_3 n \Big] < \frac{\delta}{k}.
\end{equation}

This provides a proof of the following:

\begin{proposition}
\label{prop:probbound}
	Given an algorithm $A$ satisfying the condition of Proposition \ref{prop:variance} and any $\varepsilon>0$, define
		\[ B_k \coloneqq \left\{ m \in A_k \ : \ f_A(m) > \Big( \frac{\E_\pi [C]}{\E_\pi [\log_3 \Lambda]} + \varepsilon \Big) \log_3 m \right\}. \]
	Then there exists $\delta>0$ such that, for large $k$, 
		\[ \PP [ N(\Psi^k) \in B_k ] < \frac{\delta}{k}. \] \\
\end{proposition}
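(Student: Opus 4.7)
The plan is to observe that the calculation presented in the paragraphs immediately preceding the proposition essentially constitutes the proof, so my task is to assemble the pieces cleanly and verify the choice of $\delta$. I would begin by restating the key decomposition
$$\frac{f_A(n)}{\log_3 n} = \frac{\bar c}{\log_3 \bar\lambda},$$
and then control the numerator and denominator separately via Proposition \ref{prop:variance} and Chebyshev's inequality.

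First I would handle the numerator. Since $\bar c = \frac{1}{k}S_k(C)$ with $C$ a bounded function on the finite state space $X$, Proposition \ref{prop:variance} asserts $\Var[\frac{1}{\sqrt{k}} S_k(\bar C)] \to \gamma_C^2$. Chebyshev then gives, for any $\varepsilon_1 > 0$ and any $\delta_1 > \gamma_C^2/\varepsilon_1^2$, the bound $\PP[|\bar c - \E_\pi[C]| > \varepsilon_1] < \delta_1/k$ for all sufficiently large $k$.

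Next I would treat the denominator. The key observation (established in the preceding discussion via inequality (\ref{eqn:lowerbound}) and its upper-bound counterpart) is that
$$\Bigl|\log\bar\lambda - \frac{1}{k}S_k(\log\Lambda)\Bigr| \leq \frac{\log a}{k},$$
which is a \emph{deterministic} bound holding for every $n \in A_k$. Applying Proposition \ref{prop:variance} and Chebyshev to $\log\Lambda$ (which is bounded on $X$) in the same fashion gives, for any $\delta_2 > \gamma_{\log\Lambda}^2/\varepsilon_2^2$, the bound $\PP[|\log\bar\lambda - \E_\pi[\log\Lambda]| > \varepsilon_2] < \delta_2/k$ for large $k$.

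Finally I would combine the two estimates. Since $\E_\pi[\log_3\Lambda] > 0$ (because $\Lambda \geq 2$ pointwise), the map $(u,v)\mapsto u/v$ is continuous at $(\E_\pi[C], \E_\pi[\log_3\Lambda])$, so given $\varepsilon > 0$ I can choose $\varepsilon_1, \varepsilon_2 > 0$ small enough that whenever $|\bar c - \E_\pi[C]| < \varepsilon_1$ and $|\log_3\bar\lambda - \E_\pi[\log_3\Lambda]| < \varepsilon_2$, we have
$$\Bigl|\frac{f_A(n)}{\log_3 n} - \frac{\E_\pi[C]}{\E_\pi[\log_3\Lambda]}\Bigr| < \varepsilon.$$
A union bound then yields $\PP[N(\Psi^k) \in B_k] < (\delta_1 + \delta_2)/k$, and setting $\delta = \delta_1 + \delta_2$ completes the proof. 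The one point that requires a moment of care — and is the closest thing to an obstacle — is checking that the approximation error between $\log\bar\lambda$ and $\frac{1}{k}S_k(\log\Lambda)$ is deterministic of order $1/k$ rather than a probabilistic statement, since otherwise combining it with the Chebyshev estimate would require an additional union bound; fortunately the uniform bound $(\log a)/k$ derived from $\lambda_i \geq 2$ and $s_i \leq a-1$ provides exactly what is needed.
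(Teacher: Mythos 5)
Your proposal matches the paper's argument essentially step for step: the paper itself proves Proposition \ref{prop:probbound} by assembling exactly the Chebyshev estimates for $\bar c$ and $\log\bar\lambda$, the deterministic $O(1/k)$ bound coming from $\lambda_0\cdots\lambda_{k-1}\leq n \leq a\,\lambda_0\cdots\lambda_{k-1}$, and a continuity plus union-bound step with $\delta = \delta_1+\delta_2$. Your remark that the $\log\bar\lambda$ versus $\tfrac{1}{k}S_k(\log\Lambda)$ error is deterministic rather than probabilistic is precisely the point the paper relies on, so the argument is correct and not a different route.
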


\subsection{Logarithmic density}
We first note that, if an algorithm divides by $\lambda$ for some state, then that state has $\lambda$ possible successors.

Fix $b \in A_k$ and let $R(b) = \{r_0, r_1, \ldots, r_{k-1}, r_k=1\}$ as above and $\lambda_i = \Lambda(r_i)$ for each $0 \leq i < k$. Consider $\PP[n = b]$; by Proposition \ref{prop:bij}, this is the same as the probability that $\Psi^k$ is equal to the walk $R(b)$. We can calculate this using the definition of $\Psi^k$: The first residue is chosen `correctly' (that is, $\Psi_0^k=r_0$) with probability $1/a$; after that, $\Psi^k$ chooses uniformly from all successors which preserve the possibility of ending at 1. Let $p_i$ be the probability of making the right choice at the $i$th step given that all previous choices were correct; then $p_0 = a^{-1}$ and, for $i >0$,
	\[ p_i = \PP[ \Psi_{i}^k = r_i \ | \ \Psi_{i-1}^k = r_{i-1} ]. \]
Maintaining our assumption from Proposition \ref{prop:variance}, there is some $m$ independent of $k$ so that for $0 < i < k - m$ we have exactly $\lambda_{i-1}$ valid successors at the $i$th step and
	\[ p_i = (\lambda_{i-1})^{-1}. \]
For $i \geq k-m$, the choice is made uniformly from \emph{at most} $\lambda_{i-1}$ options (some successors may not be valid, since they might make it impossible to end at 1 at the $k$th step), so
	\[ p_i \geq (\lambda_{i-1})^{-1}. \]

Recall from Equation \ref{eqn:lowerbound} above that $b \geq \lambda_0 \lambda_1 \cdots \lambda_{k-1}$. So
\begin{align*}
	\frac{1}{b} & \leq \frac{1}{\lambda_0 \lambda_1 \cdots \lambda_{k-1}} \\
	&= p_1 p_2 \cdots p_{k-m} \frac{1}{\lambda_{k-m} \cdots \lambda_{k-1}} \\
	&\leq (a \cdot p_0) \cdot (p_1 p_2 \cdots p_{k-m}) \cdot (p_{k-m+1} \cdots p_k) \\
	&= a \cdot \PP[ n = b].
\end{align*}

Therefore
\begin{equation}
\label{eqn:Bkbound}
	\sum_{b \in B_k} \frac{1}{b} \leq a \sum_{b \in B_k} \PP [n = b] = a \cdot \PP [n \in B_k].
\end{equation}

We can now prove the following:
\begin{theorem}
	Let $A$ be an algorithm satisfying the condition of Proposition \ref{prop:variance}, and for any $\varepsilon>0$ let
		\[ B \coloneqq \left\{ b \in \NN \ :\ f_A(b) > \Big( \frac{\E_\pi [C]}{\E_\pi [\log_3 \Lambda]} + \varepsilon \Big) \log_3 b \right\}. \]
	Then $B$ has logarithmic density zero.
\end{theorem}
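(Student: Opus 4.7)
The plan is to decompose $B$ according to how many steps the algorithm takes, bound each piece using the previous results, and then sum up to show the logarithmic density vanishes.

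First, I would write $B = \bigsqcup_{k \geq 1} B_k$, where $B_k = B \cap A_k$ is the subset of $B$ on which $A$ terminates in exactly $k$ steps; this matches the notation of Proposition \ref{prop:probbound}. Combining that proposition with Equation \ref{eqn:Bkbound} yields, for all $k \geq K_0$ (with $K_0$ and $\delta$ provided by the proposition),
\[
\sum_{b \in B_k} \frac{1}{b} \;\leq\; a \cdot \PP[n \in B_k] \;\leq\; \frac{a\delta}{k}.
\]
For $k < K_0$, the set $A_k$ is finite (since every $b \in A_k$ satisfies $b < a \cdot a^k$), so $\sum_{b \in B_k} 1/b$ contributes only some fixed constant $C_0$ independent of $N$.

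The second ingredient is the crude lower bound $b \geq \lambda_0 \lambda_1 \cdots \lambda_{k-1} \geq 2^k$ from Equation \ref{eqn:lowerbound}, using $\lambda_i \geq 2$. Consequently, $b \leq N$ with $b \in A_k$ forces $k \leq \log_2 N$. Setting $K(N) := \lfloor \log_2 N \rfloor$, I would then estimate
\[
\sum_{\substack{b \in B \\ b \leq N}} \frac{1}{b} \;\leq\; \sum_{k=1}^{K(N)} \sum_{b \in B_k} \frac{1}{b} \;\leq\; C_0 \;+\; \sum_{k=K_0}^{K(N)} \frac{a\delta}{k} \;\leq\; C_0 \;+\; a\delta \bigl( \log\log_2 N + O(1) \bigr).
\]
Dividing by $\log N$ and letting $N \to \infty$, both terms on the right go to $0$, which is precisely the statement that $B$ has logarithmic density zero.

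The main conceptual step is recognizing that the ``probability $\leq \delta/k$'' bound of Proposition \ref{prop:probbound} converts, via the factor-of-$a$ inequality $\sum_{b \in B_k} 1/b \leq a \PP[n \in B_k]$, into a harmonic-type bound on the reciprocal sum over $B_k$; the rest is a routine double-sum estimate. The only mild technical nuisance is packaging the small-$k$ terms where Proposition \ref{prop:probbound} does not yet apply, but these contribute only an absolute constant and are absorbed after dividing by $\log N$. Notably, the argument would \emph{not} go through with natural density in place of logarithmic density, since the $1/k$ bound on $\PP[N(\Psi^k) \in B_k]$ is not strong enough to control $|B \cap [1,N]|/N$ directly; the $1/b$ weighting is essential.
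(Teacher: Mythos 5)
Your proof is correct and matches the paper's argument almost step for step: decompose $B$ into the $B_k$, apply Equation \ref{eqn:Bkbound} and Proposition \ref{prop:probbound} to get a $\tfrac{a\delta}{k}$ bound on each reciprocal sum, absorb the finitely many small-$k$ exceptions into a constant, and use $b \geq 2^k$ to cap the number of terms at roughly $\log_2 N$. The paper phrases the final step via the harmonic number $H_m$ with $2^m \le M < 2^{m+1}$ and you estimate $\sum_{k \le \log_2 N} 1/k \sim \log\log N$, but these are the same computation; your closing remark about why natural density would not follow from the same bound is also accurate.
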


\begin{proof}
	Note that, since 2 is the smallest dividing factor used by any step, elements of $A_i$ can be no smaller than $2^i$. Therefore, if we let $2^m \leq M < 2^{m+1}$, we have
		 \[ \{1, 2, \ldots, M\} \subset \bigcup_{k=1}^m A_k. \]
	So, using Equation \ref{eqn:Bkbound} above,
		\[ \sum_{\substack{b \in B \\ b \leq M}} \frac{1}{b} \leq \sum_{k=1}^m \sum_{b \in B_k} \frac{1}{b} \leq a \cdot \sum_{k=1}^m  \PP[ N(\Psi^k) \in B_k]. \]
	From Proposition \ref{prop:probbound}, the summand is bounded above by $\frac{\delta}{k}$ for large $k$. Therefore there exists some constant $C$ such that
		\[ \sum_{\substack{b \in B \\ b \leq M}} \frac{1}{b} \leq a \cdot \sum_{k=1}^m \frac{\delta}{k} + C = a\delta \cdot H_m + C, \]
	where $H_m$ denotes the $m$th harmonic number.
	
	Now
		\[ \frac{1}{\log M}\sum_{\substack{b \in B \\ b \leq M}} \frac{1}{b} \leq \frac{a\delta \cdot H_m + C}{\log 2^m} = \frac{a\delta \cdot H_m + C}{m\log 2}. \]
	The right hand side approaches zero as $m$ approaches infinity, so the left hand side approaches zero as $M$ approaches infinity. Therefore $B$ has logarithmic density zero.
\end{proof}

Since $f \leq f_A$ for all algorithms $A$, we immediately get the following:

\begin{corollary}
\label{cor:logdensity}
	For any $\varepsilon>0$ let
		\[ B \coloneqq \left\{ b \in \NN \ :\ f(b) > \Big( \frac{\E_\pi [C]}{\E_\pi [\log_3 \Lambda]} + \varepsilon \Big) \log_3 b \right\}. \]
	Then $B$ has logarithmic density zero.
\end{corollary}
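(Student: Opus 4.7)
The plan is essentially immediate from the preceding theorem. I would first observe that for every natural number $b$ and every algorithm $A$ of the general form introduced in Section 4.2, one has $f(b) \leq f_A(b)$: this is because $f(b)$ denotes the minimum number of ones across \emph{all} valid $\{1, +, \cdot, \text{parentheses}\}$-expressions for $b$, whereas $f_A(b)$ only counts the ones in the particular expression output by $A$. Consequently, if $b$ lies in the set $B$ defined in the corollary, then
\[
	f_A(b) \;\geq\; f(b) \;>\; \Big( \frac{\E_\pi[C]}{\E_\pi[\log_3 \Lambda]} + \varepsilon \Big) \log_3 b,
\]
so $b$ also lies in the set $B$ appearing in the preceding theorem, taken with respect to the algorithm $A$ whose stationary expectations $\E_\pi[C]$ and $\E_\pi[\log_3 \Lambda]$ are the ones named in the corollary's statement.

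Thus the corollary's $B$ is contained in the theorem's $B$. Since logarithmic density is monotone under set inclusion, and the theorem produces an algorithm $A$ for which the larger set has logarithmic density zero, the corollary follows at once. There is no genuine obstacle to overcome; the only subtle point is that the statement of the corollary implicitly requires the existence of an algorithm $A$ satisfying the hypothesis of Proposition \ref{prop:variance} whose stationary quantities $\E_\pi[C]$, $\E_\pi[\log_3 \Lambda]$ equal those named in the statement, which is guaranteed by the setup of the theorem.
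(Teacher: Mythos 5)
Your argument is correct and matches the paper's own one-line justification, namely that $f \leq f_A$ pointwise, so the corollary's set $B$ is contained in the set controlled by the preceding theorem, and logarithmic density zero is inherited by subsets. No differences worth noting.
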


\section{Proof of Main Result}
\subsection{Greedy algorithm in higher basis.} 
The most obvious new class of algorithms to consider is greedy algorithms for higher bases. For example, the greedy algorithm in base 30 can be succinctly written as 
	\[ \Lambda = (3, 3, 2, 3, 2, 5, 3, 3, 2, 3, \ 2, 2, 3, 3, 2, 3, 2, 2, 3, 3, \ 2, 3, 2, 2, 3, 5, 2, 3, 2, 2), \]
giving rise to a transition matrix 
	\tiny\[  \text{\normalsize$M=$} \arraycolsep=1.5pt\left(\begin{array}{cccccccccccccccccccccccccccccc}
	\frac{1}{3} & \frac{1}{3} & \cdot & \cdot & \cdot & \cdot & \cdot & \cdot & \cdot & \cdot & \cdot & \cdot & \cdot & \cdot & \cdot & \cdot & \cdot & \cdot & \cdot & \cdot & \cdot & \cdot & \cdot & \cdot & \cdot & \cdot & \cdot & \cdot & \cdot & \cdot \\ 
	\cdot & \cdot & \frac{1}{2} & \frac{1}{3} & \cdot & \frac{1}{5} & \cdot & \cdot & \cdot & \cdot & \cdot & \cdot & \cdot & \cdot & \cdot & \cdot & \cdot & \cdot & \cdot & \cdot & \cdot & \cdot & \cdot & \cdot & \cdot & \cdot & \cdot & \cdot & \cdot & \cdot \\ 
	\cdot & \cdot & \cdot & \cdot & \frac{1}{2} & \cdot & \frac{1}{3} & \frac{1}{3} & \cdot & \cdot & \cdot & \cdot & \cdot & \cdot & \cdot & \cdot & \cdot & \cdot & \cdot & \cdot & \cdot & \cdot & \cdot & \cdot & \cdot & \cdot & \cdot & \cdot & \cdot & \cdot \\ 
	\cdot & \cdot & \cdot & \cdot & \cdot & \cdot & \cdot & \cdot & \cdot & \frac{1}{3} & \cdot & \cdot & \cdot & \cdot & \cdot & \cdot & \cdot & \cdot & \cdot & \cdot & \cdot & \cdot & \cdot & \cdot & \cdot & \cdot & \cdot & \cdot & \cdot & \cdot \\ 
	\cdot & \cdot & \cdot & \cdot & \cdot & \cdot & \cdot & \cdot & \frac{1}{2} & \cdot & \cdot & \cdot & \frac{1}{3} & \frac{1}{3} & \cdot & \cdot & \cdot & \cdot & \cdot & \cdot & \cdot & \cdot & \cdot & \cdot & \cdot & \cdot & \cdot & \cdot & \cdot & \cdot \\ 
	\cdot & \cdot & \cdot & \cdot & \cdot & \cdot & \cdot & \cdot & \cdot & \cdot & \frac{1}{2} & \frac{1}{2} & \cdot & \cdot & \cdot & \frac{1}{3} & \cdot & \cdot & \cdot & \cdot & \cdot & \cdot & \cdot & \cdot & \cdot & \frac{1}{5} & \cdot & \cdot & \cdot & \cdot \\ 
	\cdot & \cdot & \cdot & \cdot & \cdot & \cdot & \cdot & \cdot & \cdot & \cdot & \cdot & \cdot & \cdot & \cdot & \cdot & \cdot & \cdot & \cdot & \frac{1}{3} & \frac{1}{3} & \cdot & \cdot & \cdot & \cdot & \cdot & \cdot & \cdot & \cdot & \cdot & \cdot \\ 
	\cdot & \cdot & \cdot & \cdot & \cdot & \frac{1}{5} & \cdot & \cdot & \cdot & \cdot & \cdot & \cdot & \cdot & \cdot & \frac{1}{2} & \cdot & \cdot & \cdot & \cdot & \cdot & \cdot & \frac{1}{3} & \cdot & \cdot & \cdot & \cdot & \cdot & \cdot & \cdot & \cdot \\ 
	\cdot & \cdot & \cdot & \cdot & \cdot & \cdot & \cdot & \cdot & \cdot & \cdot & \cdot & \cdot & \cdot & \cdot & \cdot & \cdot & \frac{1}{2} & \frac{1}{2} & \cdot & \cdot & \cdot & \cdot & \cdot & \cdot & \frac{1}{3} & \cdot & \cdot & \cdot & \cdot & \cdot \\ 
	\cdot & \cdot & \cdot & \cdot & \cdot & \cdot & \cdot & \cdot & \cdot & \cdot & \cdot & \cdot & \cdot & \cdot & \cdot & \cdot & \cdot & \cdot & \cdot & \cdot & \cdot & \cdot & \cdot & \cdot & \cdot & \cdot & \cdot & \frac{1}{3} & \cdot & \cdot \\ 
	\frac{1}{3} & \frac{1}{3} & \cdot & \cdot & \cdot & \cdot & \cdot & \cdot & \cdot & \cdot & \cdot & \cdot & \cdot & \cdot & \cdot & \cdot & \cdot & \cdot & \cdot & \cdot & \frac{1}{2} & \cdot & \cdot & \cdot & \cdot & \cdot & \cdot & \cdot & \cdot & \cdot \\ 
	\cdot & \cdot & \cdot & \frac{1}{3} & \cdot & \cdot & \cdot & \cdot & \cdot & \cdot & \cdot & \cdot & \cdot & \cdot & \cdot & \cdot & \cdot & \cdot & \cdot & \cdot & \cdot & \cdot & \frac{1}{2} & \frac{1}{2} & \cdot & \frac{1}{5} & \cdot & \cdot & \cdot & \cdot \\ 
	\cdot & \cdot & \cdot & \cdot & \cdot & \cdot & \frac{1}{3} & \frac{1}{3} & \cdot & \cdot & \cdot & \cdot & \cdot & \cdot & \cdot & \cdot & \cdot & \cdot & \cdot & \cdot & \cdot & \cdot & \cdot & \cdot & \cdot & \cdot & \cdot & \cdot & \cdot & \cdot \\ 
	\cdot & \cdot & \cdot & \cdot & \cdot & \frac{1}{5} & \cdot & \cdot & \cdot & \frac{1}{3} & \cdot & \cdot & \cdot & \cdot & \cdot & \cdot & \cdot & \cdot & \cdot & \cdot & \cdot & \cdot & \cdot & \cdot & \cdot & \cdot & \frac{1}{2} & \cdot & \cdot & \cdot \\ 
	\cdot & \cdot & \cdot & \cdot & \cdot & \cdot & \cdot & \cdot & \cdot & \cdot & \cdot & \cdot & \frac{1}{3} & \frac{1}{3} & \cdot & \cdot & \cdot & \cdot & \cdot & \cdot & \cdot & \cdot & \cdot & \cdot & \cdot & \cdot & \cdot & \cdot & \frac{1}{2} & \frac{1}{2} \\ 
	\cdot & \cdot & \cdot & \cdot & \cdot & \cdot & \cdot & \cdot & \cdot & \cdot & \cdot & \cdot & \cdot & \cdot & \cdot & \frac{1}{3} & \cdot & \cdot & \cdot & \cdot & \cdot & \cdot & \cdot & \cdot & \cdot & \cdot & \cdot & \cdot & \cdot & \cdot \\ 
	\cdot & \cdot & \frac{1}{2} & \cdot & \cdot & \cdot & \cdot & \cdot & \cdot & \cdot & \cdot & \cdot & \cdot & \cdot & \cdot & \cdot & \cdot & \cdot & \frac{1}{3} & \frac{1}{3} & \cdot & \cdot & \cdot & \cdot & \cdot & \cdot & \cdot & \cdot & \cdot & \cdot \\ 
	\cdot & \cdot & \cdot & \cdot & \frac{1}{2} & \cdot & \cdot & \cdot & \cdot & \cdot & \cdot & \cdot & \cdot & \cdot & \cdot & \cdot & \cdot & \cdot & \cdot & \cdot & \cdot & \frac{1}{3} & \cdot & \cdot & \cdot & \frac{1}{5} & \cdot & \cdot & \cdot & \cdot \\ 
	\cdot & \cdot & \cdot & \cdot & \cdot & \cdot & \cdot & \cdot & \cdot & \cdot & \cdot & \cdot & \cdot & \cdot & \cdot & \cdot & \cdot & \cdot & \cdot & \cdot & \cdot & \cdot & \cdot & \cdot & \frac{1}{3} & \cdot & \cdot & \cdot & \cdot & \cdot \\ 
	\cdot & \cdot & \cdot & \cdot & \cdot & \frac{1}{5} & \cdot & \cdot & \frac{1}{2} & \cdot & \cdot & \cdot & \cdot & \cdot & \cdot & \cdot & \cdot & \cdot & \cdot & \cdot & \cdot & \cdot & \cdot & \cdot & \cdot & \cdot & \cdot & \frac{1}{3} & \cdot & \cdot \\ 
	\frac{1}{3} & \frac{1}{3} & \cdot & \cdot & \cdot & \cdot & \cdot & \cdot & \cdot & \cdot & \frac{1}{2} & \frac{1}{2} & \cdot & \cdot & \cdot & \cdot & \cdot & \cdot & \cdot & \cdot & \cdot & \cdot & \cdot & \cdot & \cdot & \cdot & \cdot & \cdot & \cdot & \cdot \\ 
	\cdot & \cdot & \cdot & \frac{1}{3} & \cdot & \cdot & \cdot & \cdot & \cdot & \cdot & \cdot & \cdot & \cdot & \cdot & \cdot & \cdot & \cdot & \cdot & \cdot & \cdot & \cdot & \cdot & \cdot & \cdot & \cdot & \cdot & \cdot & \cdot & \cdot & \cdot \\ 
	\cdot & \cdot & \cdot & \cdot & \cdot & \cdot & \frac{1}{3} & \frac{1}{3} & \cdot & \cdot & \cdot & \cdot & \cdot & \cdot & \frac{1}{2} & \cdot & \cdot & \cdot & \cdot & \cdot & \cdot & \cdot & \cdot & \cdot & \cdot & \cdot & \cdot & \cdot & \cdot & \cdot \\ 
	\cdot & \cdot & \cdot & \cdot & \cdot & \cdot & \cdot & \cdot & \cdot & \frac{1}{3} & \cdot & \cdot & \cdot & \cdot & \cdot & \cdot & \frac{1}{2} & \frac{1}{2} & \cdot & \cdot & \cdot & \cdot & \cdot & \cdot & \cdot & \frac{1}{5} & \cdot & \cdot & \cdot & \cdot \\ 
	\cdot & \cdot & \cdot & \cdot & \cdot & \cdot & \cdot & \cdot & \cdot & \cdot & \cdot & \cdot & \frac{1}{3} & \frac{1}{3} & \cdot & \cdot & \cdot & \cdot & \cdot & \cdot & \cdot & \cdot & \cdot & \cdot & \cdot & \cdot & \cdot & \cdot & \cdot & \cdot \\ 
	\cdot & \cdot & \cdot & \cdot & \cdot & \frac{1}{5} & \cdot & \cdot & \cdot & \cdot & \cdot & \cdot & \cdot & \cdot & \cdot & \frac{1}{3} & \cdot & \cdot & \cdot & \cdot & \frac{1}{2} & \cdot & \cdot & \cdot & \cdot & \cdot & \cdot & \cdot & \cdot & \cdot \\ 
	\cdot & \cdot & \cdot & \cdot & \cdot & \cdot & \cdot & \cdot & \cdot & \cdot & \cdot & \cdot & \cdot & \cdot & \cdot & \cdot & \cdot & \cdot & \frac{1}{3} & \frac{1}{3} & \cdot & \cdot & \frac{1}{2} & \frac{1}{2} & \cdot & \cdot & \cdot & \cdot & \cdot & \cdot \\ 
	\cdot & \cdot & \cdot & \cdot & \cdot & \cdot & \cdot & \cdot & \cdot & \cdot & \cdot & \cdot & \cdot & \cdot & \cdot & \cdot & \cdot & \cdot & \cdot & \cdot & \cdot & \frac{1}{3} & \cdot & \cdot & \cdot & \cdot & \cdot & \cdot & \cdot & \cdot \\ 
	\cdot & \cdot & \cdot & \cdot & \cdot & \cdot & \cdot & \cdot & \cdot & \cdot & \cdot & \cdot & \cdot & \cdot & \cdot & \cdot & \cdot & \cdot & \cdot & \cdot & \cdot & \cdot & \cdot & \cdot & \frac{1}{3} & \cdot & \frac{1}{2} & \cdot & \cdot & \cdot \\ 
	\cdot & \cdot & \cdot & \cdot & \cdot & \cdot & \cdot & \cdot & \cdot & \cdot & \cdot & \cdot & \cdot & \cdot & \cdot & \cdot & \cdot & \cdot & \cdot & \cdot & \cdot & \cdot & \cdot & \cdot & \cdot & \frac{1}{5} & \cdot & \frac{1}{3} & \frac{1}{2} & \frac{1}{2} \\ 
\end{array}\right)\text{\normalsize.} \]\normalsize

It is quite easy to derive a numerical approximation of the stationary distribution as
	\begin{align*}
		\pi = (&0.01605, 0.03211, 0.03913, 0, 0.03929, 0.06272, 0.01272, 0.04573, 0.03753, 0, 0.05072, 0.05584, \\
		&0.01948, 0.04210, 0.06637, 0, 0.03228, 0.02909, 0.00684, 0.03131, 0.06934, 0, 0.05267, 0.04013, \\
		&0.02053, 0.04721, 0.05912, 0, 0.03640, 0.05529)^T.
	 \end{align*}
Using the transition matrix, it can be checked that the state 1 can be reached from any other state in exactly 9 steps, so that Corollary \ref{cor:logdensity} applies. From this algorithm we get the improved result
that
	\[ f(n) \leq 3.59 \log n \]
on a set of logarithmic density one.

\subsection{Improving on the greedy algorithm}
It is possible to improve on the greedy algorithm (this was not clear a priori). We define a `landscape' of algorithms by associating to each algorithm $A$ of the type above
a cost
	\[ c_A =  \frac{\E_\pi [C]}{\E_\pi [\log_3 \Lambda ]} \]
and saying that two algorithms $A_1, A_2$ are adjacent if their tuples $\Lambda_1, \Lambda_2$ agree on all but exactly one entry. The problem of finding a better bound is now a nonlinear optimization problem on a large search space.
The best known algorithm found by the author was found using simulated annealing. It has base
$2 \cdot 3 \cdot 5 \cdot 7 \cdot 11 = 2310$ and constant $c \approx 3.5286$. This algorithm implies the result
	\[ f(n) < 3.53 \log_3 n \quad \text{on a set of logarithmic density one.} \]
David Bevan's improved algorithm uses the same base, but has a constant of $c \approx 3.5197$, giving
	\[ f(n) < 3.52 \log_3 n \quad \text{on a set of logarithmic density one.} \]
The corresponding 2310-tuple is given in Appendix \ref{app:alg}.
	
\begin{table}[h!]
\begin{center}
{
\begin{tabular}{|c||>{\centering\arraybackslash}p{7em}|>{\centering\arraybackslash}p{8em}|>{\centering\arraybackslash}p{9em}|}
	\hline
	\multirow{2}{*}{factor} & \multicolumn{3}{c|}{frequency of use} \\
	\cline{2-4}
	& base 6 greedy \newline ($c \approx 3.65$) & base 2310 greedy \newline ($c \approx 3.56$) & base 2310 improved \newline ($c \approx 3.53$) \\
	\hline
	2 & 0.769 & 0.628 & 0.507 \\
	3 & 0.231 & 0.213 & 0.352 \\
	4 & $\cdot$ & $\cdot$ & $\cdot$ \\
	5 & $\cdot$ & 0.102 & 0.063 \\
	6 & 0 & 0 & 0 \\
	7 & $\cdot$ & 0.058 & 0.057 \\
	8 & $\cdot$ & $\cdot$ & $\cdot$ \\
	9 & $\cdot$ & $\cdot$ & $\cdot$ \\
	10 & $\cdot$ & 0 & 0 \\
	11 & $\cdot$ & 0 & 0.021 \\
	\hline
\end{tabular}
\caption{Comparison of frequency of factors used by improved algorithm with greedy algorithms. The frequency is the proportion of states which use a given factor, weighted by the limit distribution. Dots represent factors not allowed given the base.}
\label{tab:factfreq}
}
\end{center}
\end{table}%

In comparison, the greedy algorithm for base 2310 gives a constant of $3.561$. The improvement can be seen to come in part from the increased frequency with which 3 is used as a dividing factor, as shown in Table \ref{tab:factfreq}. Note also that the improved algorithm uses 11 as a dividing factor one in every fifty steps, even though it is never locally optimal (as can be seen from the center column). It seems that by taking a few locally inefficient steps, the algorithm is able to divide by 3, which is optimal according to the greedy heuristic, in ten percent more steps. Bevan's further improvement also makes use of the dividing factor 55.

Calling this metric `frequency of use' is perhaps not completely honest, since the stationary distribution gives the percentage of time the random walk on $G(A)$ spends in each state rather than the algorithm $A$ itself; we note, however, that the bound produced by an algorithm is dependent only on this stationary distribution $\pi$ and not on the `actual' behavior of the algorithm.

\section{Concluding Remarks}
\subsection{Extendability.} Using higher bases and running simulated annealing for more iterations will produce slightly better bounds. The bound produced by an algorithm is limited by the complexity of its dividing factors, and since most small numbers have suboptimal complexity (that is, $f(n)/\log_3 n$ is well above $3$), very good algorithms will have to include large dividing factors. But large factors will be used infrequently by an optimized algorithm, since dividing out by any factor is only worth it if the number of ones it costs to remove the remainder is small. This happens infrequently for large numbers, so including a single large factor, no matter how efficient it is by itself, will not have much of an effect on the resulting bound. It therefore seems unlikely that significant improvement could be made by finding better algorithms of the form considered here (which encompasses Guy's method as a special case) without using very large bases. Still, we consider the question of how to optimize over the space of algorithms a fascinating problem.

\subsection{Natural density.}  It would also be interesting to strengthen Theorem \ref{maintheorem} to state that $f(n) < 3.52 \log_3 n$ for a set of integers of \emph{natural} density 1, with the same holding 
for all bounds produced by the same type of algorithm. Since the logarithmic density has been established, it would be enough to prove the existence of a natural density for the sets $B$.

\subsection{Brownian motion} This comment, suggested by Stefan Steinerberger, is inspired by work of Sinai on large-scale properties of the $3x+1$ problem. Define $T:\mathbb{N} \rightarrow \mathbb{N}$ via
$$ T(n) = \begin{cases} n/2 \qquad &\mbox{if}~n~\mbox{is even.} \\
\frac{3n + 1}{2} \qquad &\mbox{if}~n~\mbox{is odd.}
\end{cases}$$
The $3x+1$ problem asks whether for every $n \in \mathbb{N}$ there is some $k$  such that repeated iteration yields $T^{(k)}(n) = 1$.
If one were to assume that being even or odd is equally likely for numbers arising in the course of these iterations, then one would expect an average decay of
$$ T^{(k)}(n) \sim \left(\frac{3}{4}\right)^{\frac{k}{2}} n \quad \mbox{and that} \quad  T^{(k)}(n) \left(\frac{4}{3}\right)^\frac{k}{2} \quad \mbox{appears to be random on a logarithmic scale.}$$
Some rigorous results in that direction have been obtained by Sinai \cite{sinai1,sinai2,sinai3}. Since our class of algorithms is related to the $3x+1$ problem (the only but admittedly crucial 
difference being that our algorithms always decrease the input), we were motivated to see whether similar phenomena appear. Indeed, it seems that a suitable rescaled series
of numbers behaves like Brownian motion on a logarithmic scale (see Figure \ref{fig:brownian}). We consider this to be a promising direction for further research.

\begin{figure}[h]
	\includegraphics[width=0.49\textwidth]{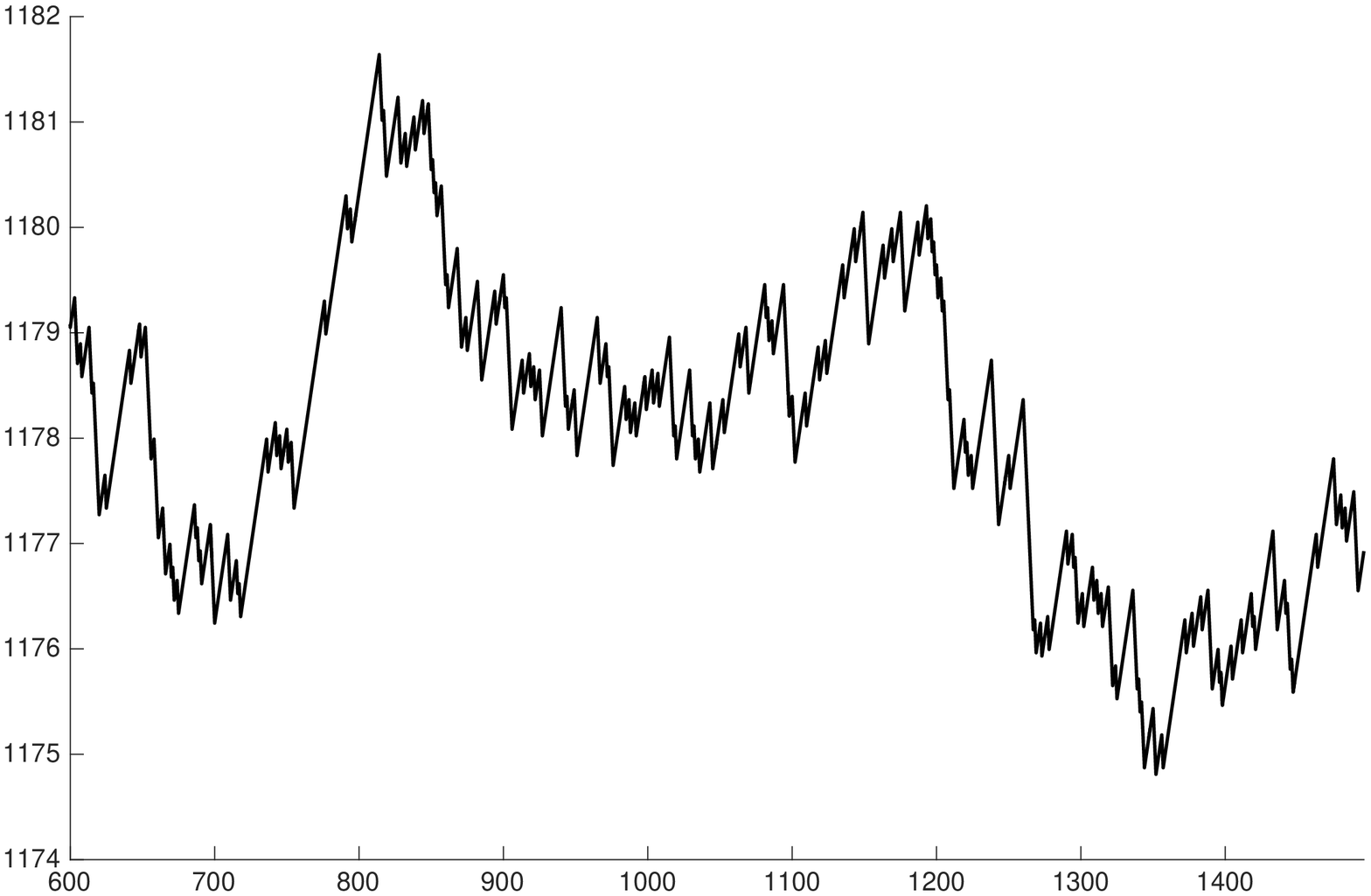}
	\includegraphics[width=0.49\textwidth]{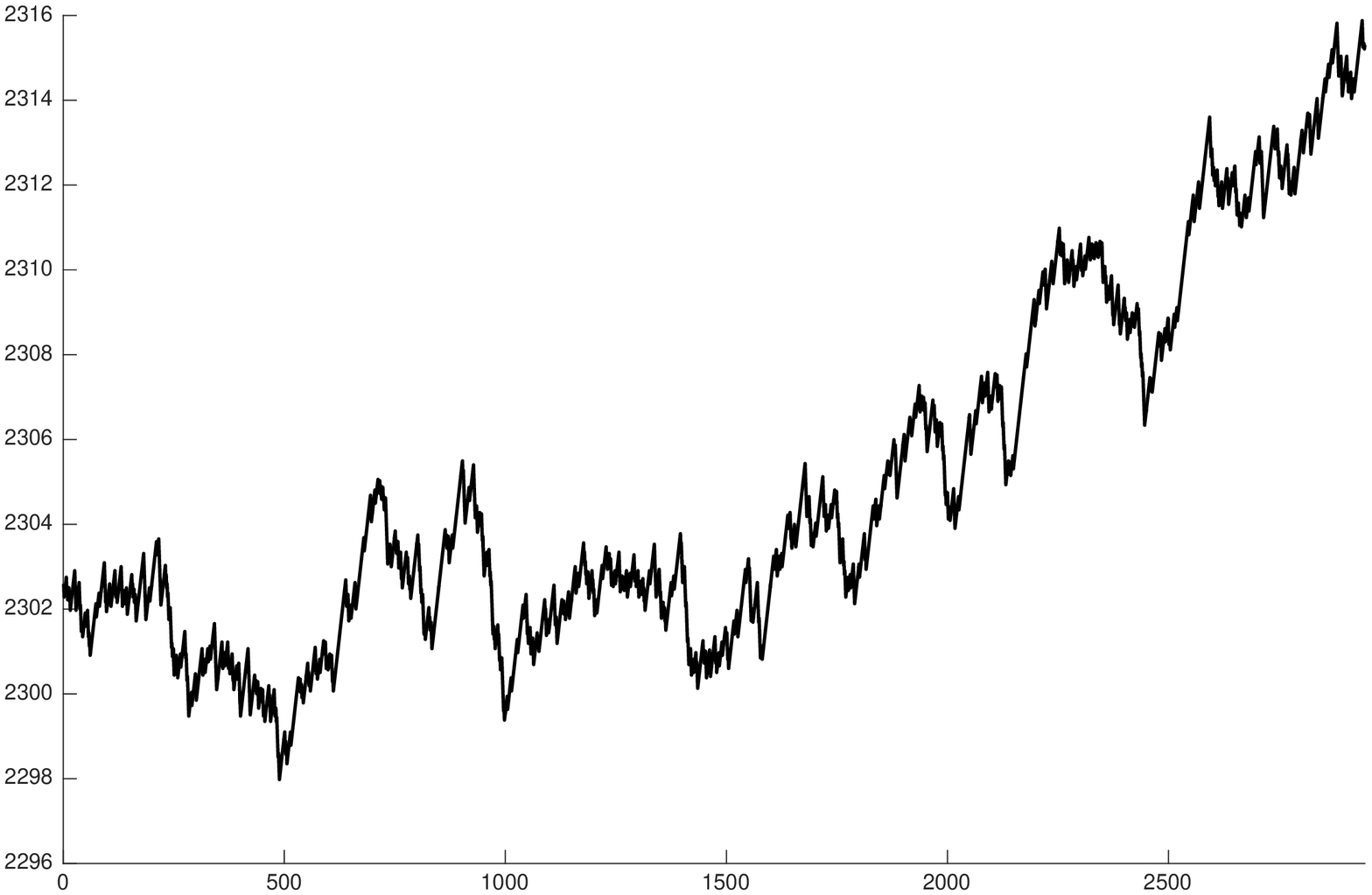}
	\caption{ $\log \big(x_i \bar\lambda^i \big)$ using the base 6 greedy algorithm for a random element in $A_{1496}$ (left) and one in $A_{2944}$ (right).}
	\label{fig:brownian}
\end{figure}

\section*{Acknowledgements}
The author thanks Stefan Steinerberger, Juan Arias de Reyna, and David Bevan for feedback on an earlier preprint.

The author is also grateful for access to the \textsc{Omega} cluster of the Yale University High Performance Computing Center.

\appendix
\section{Proof of Lemma \ref{lem:clt}}
\label{pf:clt}

\begin{lemma*}[\ref{lem:clt}]
	Let $\mathbf{\Phi}$ be a Markov chain on a finite space $X$. If there exists a state $i \in X$ and some $m \in \NN$ such that $i$ can be reached in exactly $m$ steps from any starting point with positive probability, that is,
$$P^m(x,i) > 0 \quad \mbox{ for all} \quad x \in X,$$
then any starting distribution approaches the unique stationary distribution $\pi$ at a geometric rate.

Moreover, we have the following: let $g:X \to \RR$ be bounded and 
	\[ S_n(g) = \sum_{k=1}^n g(\Phi_k). \]
Then if we let $\bar{g} = g - \int g\, d\pi$ where $\pi$ is the limit distribution, there exists a constant $0 \leq \gamma_g^2 < \infty$ such that
	\[ \lim_{n \to \infty} \Var \left[ \frac{1}{\sqrt{n}} S_n(\bar{g}) \right] = \gamma_g^2. \]

\end{lemma*}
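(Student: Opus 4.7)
The plan is to verify that the hypothesis yields Doeblin's minorization condition, invoke the Meyn--Tweedie framework to obtain uniform (geometric) ergodicity, and then derive the asymptotic variance statement from the resulting exponential decay of covariances.

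First, I would observe that since $X$ is finite, the hypothesis produces a strictly positive constant $\eps := \min_{x \in X} P^m(x, i) > 0$. Taking $\nu = \delta_i$ (point mass at $i$), we then have $P^m(x, A) \geq \eps\, \nu(A)$ for every $x \in X$ and every $A \subseteq X$, which is Doeblin's minorization condition (\cite{meyn}, Chapter~16). The standard consequences are that $\mathbf{\Phi}$ is uniformly ergodic: a unique stationary distribution $\pi$ exists, and
\[
\sup_{x \in X} \|P^n(x, \cdot) - \pi\|_{\mathrm{TV}} \leq (1-\eps)^{\lfloor n/m \rfloor}.
\]
This gives the first conclusion.

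For the variance, I would first work under the stationary measure. Expanding bilinearly and using stationarity,
\[
\Var_\pi \Bigl[\tfrac{1}{\sqrt{n}} S_n(\bar{g}) \Bigr] = \Var_\pi(\bar{g}(\Phi_1)) + \frac{2}{n} \sum_{k=1}^{n-1} (n-k) \, \mathrm{Cov}_\pi\bigl(\bar{g}(\Phi_1), \bar{g}(\Phi_{1+k})\bigr).
\]
Geometric ergodicity gives $|\mathrm{Cov}_\pi(\bar{g}(\Phi_1), \bar{g}(\Phi_{1+k}))| \leq 2\|g\|_\infty^2 (1-\eps)^{\lfloor k/m \rfloor}$, so the covariance sequence is absolutely summable; a Cesàro-type argument (or dominated convergence) then yields
\[
\gamma_g^2 := \Var_\pi(\bar{g}(\Phi_1)) + 2 \sum_{k=1}^\infty \mathrm{Cov}_\pi\bigl(\bar{g}(\Phi_1), \bar{g}(\Phi_{1+k})\bigr) < \infty,
\]
which is the proposed limit.

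The remaining step, and the only real technical point, is removing the assumption that the chain starts in $\pi$ (the lemma is stated for arbitrary initial distributions, and in the application $\Phi_0$ is uniform on $X$). Here I would again use the total-variation bound $\|P^n(x, \cdot) - \pi\|_{\mathrm{TV}} \leq (1-\eps)^{\lfloor n/m \rfloor}$: after a transient of length $O(\log n)$ the chain is within any prescribed $\mathrm{TV}$-distance of stationarity, and since $g$ is bounded and $X$ is finite, the contribution of the initial transient to $S_n(\bar{g})$ is $O(\log n)$ and to its variance at most $O(\log^2 n)$, both negligible after dividing by $n$. This reduces the non-stationary case to the stationary one handled above. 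I expect this coupling/transient step to be the main obstacle, but for bounded $g$ on a finite state space it is essentially routine given the exponential mixing bound already in hand.
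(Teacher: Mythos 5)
Your argument for uniform ergodicity — the Doeblin minorization $P^m(x,\cdot) \geq \eps\,\delta_i(\cdot)$ with $\eps = \min_x P^m(x,i) > 0$, invoking the small-set criterion from Meyn--Tweedie — is exactly what the paper does. Where you depart is in the variance statement. The paper treats that part as a black box: having established uniform ergodicity (a special case of $V$-uniform ergodicity with $V$ constant), it cites Theorem 17.0.1.ii of Meyn--Tweedie, which directly yields the existence and finiteness of $\gamma_g^2 = \lim_n \Var[n^{-1/2}S_n(\bar g)]$ for an arbitrary initial distribution. You instead re-prove that conclusion by hand: the bilinear expansion
\[
\Var_\pi\!\left[\tfrac{1}{\sqrt n}S_n(\bar g)\right] = \rho(0) + \tfrac{2}{n}\sum_{k=1}^{n-1}(n-k)\,\rho(k),
\qquad \rho(k) := \mathrm{Cov}_\pi\bigl(\bar g(\Phi_1),\bar g(\Phi_{1+k})\bigr),
\]
the geometric bound $|\rho(k)| \lesssim (1-\eps)^{\lfloor k/m\rfloor}$ from the total-variation estimate, dominated convergence of the Ces\`aro averages, and then a transfer from $\pi$ to an arbitrary starting distribution. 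This is a correct and more self-contained route; it buys you an explicit formula $\gamma_g^2 = \rho(0) + 2\sum_{k\ge1}\rho(k)$ and avoids reliance on the substantially heavier machinery of Chapter 17 of Meyn--Tweedie, at the cost of having to handle the non-stationary start yourself. On that last point your sketch is a little compressed: showing that the transient contributes $O(\log^2 n)$ to the variance is not by itself enough, since one must also control the cross-covariance between the transient block $\sum_{k\le T}\bar g(\Phi_k)$ and the tail $\sum_{k>T}\bar g(\Phi_k)$, and show that the variance of the tail matches the stationary one up to $o(n)$ — the clean way is the coupling you allude to (with probability $1 - (1-\eps)^{\lfloor T/m\rfloor}$ the chain agrees with a stationary copy after time $T$, so taking $T = C\log n$ with $C$ large makes the discrepancy decay faster than any polynomial). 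With that spelled out, your proof is complete and stands as a valid alternative to the citation-based argument in the paper.
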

\begin{proof}
We prove this using results from Meyn and Tweedie \cite{meyn}. An important concept is \emph{uniform ergodicity}: we say that a Markov chain $\mathbf{\Phi}$ is uniformly ergodic if there exist $r>1$ and $R < \infty$ such that for all $x$
	\[ \| P^n (x, \cdot) - \pi(\cdot) \| \leq R r^{-n}, \]
where $\pi$ is the stationary distribution, interpreted as a measure on $X$ (see \cite[Theorem 16.0.2.]{meyn}). Put differently, $P^n(x, \cdot)$ approaches $\pi(\cdot)$ geometrically. Here we define $\| \nu \|$ for a measure $\nu$ to be
	\[ \| \nu \| := \sup_{g \colon |g| \leq 1} \left| \int g(x) \nu(dx) \right|. \]
Uniform ergodicity is a special case of what is called $V$-uniform ergodicity (see \cite[Chapter 16]{meyn}) such that $V$ is a constant function. Theorem 17.0.1.ii in \cite{meyn} implies that this is enough to give the desired result for $\mathbf{\Phi}$. So if we can establish uniform ergodicity we have geometric convergence to the limit distribution and convergence of the variance of $n^{-1/2} S_n(g)$ to a finite constant depending on $g$. \\

The criterion we will use for showing ergodicity uses the notion of a small set (see \cite[Section 5.2.]{meyn}). If $\nu$ is a measure on the state space $X$, a measurable set $C$ is called \emph{$\nu$-small} if there exists $m \in \NN$ such that for all $x \in X$ and all measurable sets $B$
$$	P^m (x,B) \geq \nu(B).$$
Theorem 16.0.2 in \cite{meyn} states that a Markov chain is uniformly ergodic if and only if $X$ is $\nu$-small for some $\nu$.
This is quite easy to show for our purposes: suppose that $X$ is finite and some state $i \in X$ can be reached in exactly $m$ steps from any starting point. Then let
	\[ l = \min_{x \in X} P^m(x, i) > 0 \]
and define a measure
	\[ \nu(A) = \left\{\begin{array}{ll}
				l, & i \in B \\
				0, & i \not\in B
			\end{array}\right. .\]
Now if a set $B$ does not contain $i$ then its $\nu$-measure is 0 and the condition is trivially satisfied. Otherwise,
	\[ P^m(x, B) \geq P^m (x, \{i\}) \geq l = \nu (B). \]
Therefore $X$ is $\nu$-small and the Markov chain is uniformly ergodic.
\end{proof}

\section{Proof of Claim \ref{claim:R_onto}}
\label{sec:R_onto_pf}

First we prove the following:
\begin{lemma}
	Let $A$ be a base $a$ algorithm with dividing factor function $\Lambda$, and suppose the residue $j$ is a successor of the residue $i$: that is, there exists a natural number $n \equiv i \pmod{a}$ such that $n_1 \equiv j \pmod{a}$, where $n_1$ is the result of the first step of the algorithm $A$.
	
	Then if $\lambda = \Lambda(i)$ and $s = i \bmod \lambda$, there exists an integer $k$ such that
		\[ j = (ka + i - s)/\lambda. \]
\end{lemma}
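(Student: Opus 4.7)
The plan is a direct algebraic manipulation using the definitions. The hypothesis that $j$ is a successor of $i$ gives us a specific $n$ to work with, and the claim is essentially an identity relating the residues at consecutive steps once we understand how $n_1$ is built from $n$.

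First, I would unpack the definitions. Take $n \equiv i \pmod{a}$ as guaranteed by hypothesis, so $n = at + i$ for some non-negative integer $t$. Since $\lambda = \Lambda(i)$ divides $a$ (this is part of the requirements on $\Lambda$) and $n \equiv i \pmod{a}$, reducing mod $\lambda$ gives $n \bmod \lambda = i \bmod \lambda = s$. Hence the first step of the algorithm produces
\[
n_1 \;=\; \frac{n - s}{\lambda} \;=\; \frac{at + i - s}{\lambda}.
\]
Note $i - s$ is a multiple of $\lambda$ by definition of $s$, and $a$ is a multiple of $\lambda$ by assumption, so $n_1$ is indeed an integer.

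Next, I would use the hypothesis $n_1 \equiv j \pmod{a}$, writing $n_1 = au + j$ for some integer $u$. Combining with the expression above yields
\[
at + i - s \;=\; \lambda(au + j) \;=\; \lambda a u + \lambda j,
\]
and rearranging gives
\[
\lambda j \;=\; a(t - \lambda u) + i - s.
\]
Setting $k = t - \lambda u \in \mathbb{Z}$ produces $\lambda j = ka + i - s$, which rearranges to the desired identity $j = (ka + i - s)/\lambda$.

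There is no real obstacle here: the only point requiring any care is verifying that $s = n \bmod \lambda$ coincides with $i \bmod \lambda$, which uses the standing assumption that each dividing factor $\Lambda(i)$ divides the base $a$. Once that is observed, the rest is pure arithmetic. I expect the argument to be three or four lines when written out, and it is natural to separate it off as a lemma before the main Claim \ref{claim:R_onto} since the same identity will presumably be iterated when walking along a path in $G(A)$.
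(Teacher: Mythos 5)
Your proof is correct and follows the same approach as the paper: express $n$ and $n_1$ in terms of their residues mod $a$, use $\lambda \mid a$ to identify $n \bmod \lambda$ with $i \bmod \lambda$, substitute into $n = \lambda n_1 + s$, and solve for $j$. Incidentally, you correctly extract $k = t - \lambda u$ (what the paper calls $l - \lambda m$), whereas the paper's last line has a small slip writing ``$k = \lambda m$.''
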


\begin{proof}
	First note that $i \bmod \lambda = n \bmod \lambda$, because $i = n \bmod a$ and $\lambda \mid a$. So, by definition of the algorithm $A$,	
		\[ n = \lambda n_1 + s. \]
	Writing $n = la + i$ and $n_1 = ma + j$ for appropriate integers $l,m$, we get
		\[ la + i = \lambda(ma + j) + s. \]
	Rearranging, 
		\[ j = \big((l-\lambda m)a + i - s\big)/\lambda, \]
	so setting $k = \lambda m$ gives the desired expression.
\end{proof}

This result is used to prove the claim:
\begin{claim*}[\ref{claim:R_onto}]
	For every walk $\{v_0,v_1,\ldots,v_k\}$ on the graph $G(A)$ ending at the vertex $1$ (that is, $v_k=1$), there exists some natural number $n$ such that $R(n) = \{v_0,v_1, \ldots, v_{k}\}$.
\end{claim*}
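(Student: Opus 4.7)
The plan is to proceed by induction on the length $k+1$ of the walk, explicitly constructing the required $n$ at each stage by inverting one step of the algorithm. The base case $k=0$ is immediate: the only such walk is $\{1\}$, and taking $n=1$ gives $R(1)=\{1\}$ because the algorithm halts at once. So all the work lies in the inductive step.

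For the inductive step, fix a walk $\{v_0,v_1,\ldots,v_k\}$ on $G(A)$ with $v_k=1$ and $k\geq 1$. The sub-walk $\{v_1,\ldots,v_k\}$ has length $k$ and still ends at $1$, so by the inductive hypothesis there exists $n_1\in\NN$ with $R(n_1)=\{v_1,\ldots,v_k\}$; in particular $n_1 \equiv v_1 \pmod{a}$. Set $\lambda_0 = \Lambda(v_0)$ and $s_0 = v_0 \bmod \lambda_0$, and define
\[
    n \coloneqq \lambda_0 n_1 + s_0.
\]
It then suffices to verify that (i) $n \equiv v_0 \pmod{a}$ and (ii) one iteration of $A$ applied to $n$ produces exactly $n_1$; for then the full algorithm applied to $n$ yields the sequence $n, n_1, n_2, \ldots, 1$ whose residues modulo $a$ are precisely $v_0, v_1, \ldots, v_k$, giving $R(n) = \{v_0,\ldots,v_k\}$ as desired.

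The congruence in (i) is where the edge $(v_0,v_1)\in G(A)$ gets used, and this is the one step that requires care. The preceding Lemma, applied to this edge, furnishes an integer $c$ with $v_1 = (ca + v_0 - s_0)/\lambda_0$, equivalently $\lambda_0 v_1 + s_0 \equiv v_0 \pmod{a}$. Since $n_1 \equiv v_1 \pmod{a}$, we may write $n_1 = v_1 + c'a$ for some integer $c'$, and then
\[
    n = \lambda_0 n_1 + s_0 = \lambda_0 v_1 + s_0 + \lambda_0 c' a \equiv v_0 \pmod{a}.
\]
Given (i), verifying (ii) is routine: $n \geq \lambda_0 \geq 2$ so the algorithm does not halt at $n$; the first step uses $\Lambda(n \bmod a) = \Lambda(v_0) = \lambda_0$; and since $\lambda_0 \mid a$ and $s_0 < \lambda_0$, we have $n \bmod \lambda_0 = s_0$. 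Hence the first iterate is $(n - s_0)/\lambda_0 = n_1$, completing the induction. I expect the only potentially delicate point is the use of the preceding Lemma to extract the congruence $\lambda_0 v_1 + s_0 \equiv v_0 \pmod{a}$ from the existence of the graph edge; everything else is bookkeeping.
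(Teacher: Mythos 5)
Your proposal is correct and follows essentially the same route as the paper's proof: the same induction on walk length, the same construction $n=\Lambda(v_0)n_1 + (v_0\bmod\Lambda(v_0))$ from the inductively chosen $n_1$, and the same invocation of the preceding lemma to verify $n\equiv v_0\pmod a$. If anything, you are slightly more explicit than the paper in checking that one iteration of the algorithm applied to $n$ actually yields $n_1$, but the argument is the same.
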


\begin{proof}
	We prove by induction on $k$. The base case, $k=0$, is trivial as $R(1)=\{1\}$ gives the unique walk of length 1 ending at 1.
	
	Now suppose we have the result for $k-1$, and are given a walk $\{v_0, v_1, \ldots, v_k\}$. Using the inductive hypothesis, fix a natural number $n_1$ such that $R(n_1) = \{v_1,v_2, \ldots, v_k\}$. Then define $\lambda=\Lambda(v_0)$ and $s = v_0 \bmod \lambda$, and define
		\[ n = \lambda n_1 + s. \]
	We will show that $R(n) = \{v_0, v_1, \ldots, v_k\}$. By the definition of $n_1$, we can write
		\[ n_1 = m a + v_1 \]
	for some integer $m$, and since $v_1$ is a successor of $v_0$ we can write
		\[ v_1 = (ka + v_0 - s)/\lambda \]
	for some integer $k$. We then get
	\begin{align*}
		n &= \lambda(ma + v_1) + s \\
		 &= \lambda\big(ma + (ka + v_0 - s)/\lambda\big) + s \\
		 &= (\lambda m + k)a + v_0,
	\end{align*}
	so $n \bmod a = v_0$.
	
	It follows from this that the first residue of $R(n)$ is $\{v_0\}$, and the first step of the algorithm applied to $n$ will actually output
		\[ n = \lambda \cdot [ n_1 ] + s \]
	so, by choice of $n_1$, the remaining residues will be $\{v_1, v_2, \ldots, v_k\}$.
\end{proof}

\section{Proof of Selfridge's lower bound}
\label{sec:guyproof}
Here we give a prove of a result mentioned in \cite{guy1} and attributed to John Selfridge
because we were unable to find it in the literature and consider it worthwhile to
have it written. However, we also note that very similar (and more advanced) types of argument
along the same lines also appear in a paper by Altman \& Zelinsky \cite{alt1}.

\begin{theorem}[Selfridge] We have
	\[ f(n) \geq 3 \log_3 n, \]
with equality exactly when $n$ is a power of 3. 

\end{theorem}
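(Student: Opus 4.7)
The plan is to proceed by strong induction on $n$, with base cases $n \in \{1,2,3\}$. One checks directly that $f(1)=1 > 0$, $f(2)=2 > 3\log_3 2$, and $f(3)=3 = 3\log_3 3$; so the bound holds with the equality case exactly at the power of $3$. For the inductive step with $n \geq 4$, fix any optimal representation of $n$ and look at its outermost operation. Since $n \geq 2$, this operation is either $+$ or $\cdot$, giving a decomposition $n = a+b$ or $n = a\cdot b$ with $a,b < n$ and $f(n) = f(a) + f(b)$. In each case I would apply the inductive hypothesis to $a$ and $b$ and argue that the bound (and, when appropriate, its strict form) propagates to $n$.

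Multiplication case $n = ab$ with $a,b \geq 2$: the inductive hypothesis immediately yields
\[
f(n) = f(a) + f(b) \geq 3\log_3 a + 3\log_3 b = 3\log_3 n,
\]
and equality forces equality in both inductive bounds, which (by induction) means $a$ and $b$ are both powers of $3$, hence $n$ is as well. Addition case $n = a+b$ with $a \geq b \geq 2$: here $(a-1)(b-1) \geq 1$ gives $ab \geq a+b = n$, so the same chain of inequalities produces $f(n) \geq 3\log_3 n$. Equality would require $a=b=2$ together with $f(2) = 3\log_3 2$, which fails; so the inequality is strict in this subcase.

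The remaining, and slightly more delicate, subcase is $n = a+1$ with $a = n-1 \geq 3$. Here induction gives $f(n) \geq 1 + 3\log_3(n-1)$, and what has to be shown is
\[
1 + 3\log_3(n-1) \geq 3\log_3 n, \quad\text{i.e.,}\quad \left(\tfrac{n}{n-1}\right)^{3} \leq 3.
\]
For $n \geq 4$ the left-hand side is at most $(4/3)^3 = 64/27 < 3$, so the inequality holds strictly, confirming $f(n) > 3\log_3 n$. This is the step I expect to be the main (though still mild) obstacle, because it is the only place where the specific numerical value of the constant $3$ matters: any weakening of the base from $3$ would require redoing this estimate and could spoil it.

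Finally, to close the equality statement I would combine the case analysis: strict inequality holds in every subcase except the multiplication case with both factors powers of $3$, so $f(n) = 3\log_3 n$ forces $n$ to be a power of $3$; conversely, writing $3^k = \underbrace{(1+1+1)\cdots(1+1+1)}_{k\text{ factors}}$ uses $3k = 3\log_3(3^k)$ ones, matching the lower bound. This completes the characterization of the equality case.
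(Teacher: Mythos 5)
Your proof is correct and takes essentially the same approach as the paper's: induct on $n$, split on whether the outermost operation is multiplication or addition, handle the $n=a+1$ subcase by the explicit estimate $1 + 3\log_3(n-1) > 3\log_3 n$ for $n \geq 4$, and handle the remaining cases via $ab \geq a+b$ when $a,b \geq 2$. The only minor difference is that you track the equality characterization more explicitly through each subcase, whereas the paper compresses this into ``the remaining cases are easy to check''; this is a presentational refinement, not a different argument.
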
 

\begin{proof}
We use induction: suppose the bound holds for all numbers less than $n$. Use the formula
	\[ f(n) = \min \{ f(a) + f(b)\ :\ a,b < n \text{ such that } ab = n \text{ or } a+b = n \}. \]
First consider the case where $a$ or $b$ is 1; assume without loss of generality that $a=1$. Then
	\[ f(a) + f(b) = f(1) + f(n-1) \geq 1 + 3 \log_3 (n-1). \]
A straightforward calculation shows that this is greater than $3 \log_3 n$ for $n \geq 4$.

The other case is where $a,b \geq 2$. Then
	\[ f(a) + f(b) \geq 3 \log_3 a + 3 \log_3 b = 3 \log_3 ab. \]
If $ab = n$ then this satisfies the desired bound. Otherwise, suppose without loss of generality that $a \leq b$. Then
	\[ ab \geq 2b \geq a+b \]
so
	\[ 3 \log_3 ab \geq 3 \log_3 (a+b) = 3 \log_3 n. \]
The two cases considered together give a proof for $n \geq 4$; the remaining cases are easy to check.
\end{proof}

\section{Best known algorithm}
\label{app:alg}
The following is the $2310$-tuple representation of the current best known algorithm, due to David Bevan, which gives the bound in Theorem \ref{maintheorem}.
\begin{center}
{\fontsize{6pt}{1em}\selectfont
\begin{verbatim}
(3,  3,  2,  3,  2,  5,  3,  7,  2,  3,  2, 11,  2,  6,  2,  3,  2,  2,  3,  3,  2,  3,  2,  2,  2,  2,  2,  3,  2,  2
3,  3,  2,  3,  2,  7,  6,  6,  2,  3,  2,  2,  3,  3,  2,  3,  2,  2,  3,  7,  2,  3,  2,  2,  3, 55,  2,  3,  2,  2
3,  3,  2,  3,  2,  5,  3,  3,  2,  3,  2,  2,  2,  2,  2,  3,  2,  7,  3,  3,  2,  3,  2,  2,  6,  6,  2,  3,  2,  2
3,  7,  2,  3,  2,  5,  3,  3,  2,  3,  2,  2,  3,  3,  2,  3,  3,  3,  3,  3, 55,  3,  2,  2,  3,  5,  2,  3,  2,  7
3,  2,  2,  3,  2,  5,  3,  3,  2,  3,  2,  2,  3,  7,  2,  3,  2,  2,  3,  3,  2,  3,  2, 11,  6,  6,  2,  3,  2,  2
3,  3,  2,  3,  2,  5,  3,  3,  2,  3,  2,  7,  3,  3,  2,  3,  3,  3,  3,  3,  2,  3,  2,  2,  3,  7,  2,  3,  2,  2
3,  3,  2,  3,  2,  5,  3,  3,  2,  3,  2,  2,  3,  3,  2,  3,  2,  2,  3,  3,  2,  3,  2,  7,  3,  3,  2,  3,  2, 11
3,  3,  2,  3,  2,  5,  2,  7,  2,  3,  2,  2,  3,  3,  2,  3,  2,  2,  2,  2,  2,  3,  2,  2,  3,  3,  2,  3,  7,  7
3,  3,  2,  3,  2,  5,  3,  3,  2,  3,  2,  2,  6,  6,  2,  3,  3,  3,  3,  7,  2,  3,  2,  2,  3,  2,  2,  3,  2,  2
3,  3,  2,  3,  2,  5,  3,  3,  2,  3,  2,  2,  2,  2,  2,  3,  2,  7,  2,  2,  2,  3,  2,  2,  3,  3,  2,  3,  2,  2
3,  3,  2,  3,  2,  5,  3,  3,  2,  3,  2,  2,  3,  3,  2,  3,  3,  3,  3,  3,  2,  3,  7,  7,  3,  3,  2,  3,  2,  7
3,  3,  2,  3,  2,  5,  3,  3,  2,  3,  2, 11,  3,  7,  2,  3,  2,  2,  3,  3,  2,  3,  2,  2,  3,  3,  2,  3,  2,  2
3,  3,  2,  3,  7,  5,  3,  3,  2,  3,  2,  2,  3,  3,  2,  3,  2,  2,  3,  3,  2,  3,  2,  2,  2, 55, 55,  3,  2,  2
3,  3,  2,  3,  2,  5,  3,  3,  2,  3,  2,  2,  3,  3,  2,  3,  2, 11,  3,  3,  2,  3,  2,  7,  3,  3,  2,  3,  2,  2
3,  3,  2,  3,  2,  5,  2,  7,  2,  3,  2,  2,  6,  6,  2,  3,  2,  2,  3,  3,  2,  3,  2,  2,  3,  5,  2,  3,  2,  2
3,  3,  2,  3,  2,  7,  3,  3,  2,  3,  2,  2,  3,  3,  2,  3,  2,  2,  3,  7,  2,  3,  2, 11,  3,  3,  2,  3,  2,  2
3,  3,  2,  3,  2,  5,  3,  3,  2,  3,  2,  2,  3,  2,  2,  3,  3,  7,  3,  3,  2,  3,  2,  2,  3,  3,  2,  3,  2,  2
3,  3,  2,  3,  2,  5,  3,  2,  2,  3,  2,  2,  3,  3,  2,  3,  3,  3,  3,  3,  2,  3,  7,  7,  3,  3,  2,  3,  2,  7
3,  3,  2,  3,  2,  5,  3,  3,  2,  3,  2,  2,  3,  7,  2,  3,  2,  2,  3,  3,  2,  3,  2,  2,  3,  3,  2,  3,  2,  2
3,  3,  2,  3,  2,  5,  3,  3,  2,  3,  2,  2,  3,  2,  2,  3,  3,  3,  3,  3,  2,  3,  2,  2,  3,  7,  2,  3,  2,  2
3,  3,  2,  3,  2,  5,  3,  3,  2,  3,  2,  2,  3,  3,  2,  3,  2,  2,  3,  3,  2,  3,  2,  7,  3,  3,  2,  3,  2,  2
3,  3,  2,  3,  2,  5,  3,  7,  2,  3,  2,  2,  3,  3,  2,  3,  2,  2,  2,  2,  2,  3,  2,  2,  3,  3,  2,  3,  2,  2
3,  3,  2,  3,  2,  5,  3,  3,  2,  3,  2, 11,  3,  3,  2,  3,  2,  2,  3,  7,  2,  3,  2,  2,  2,  2,  2,  3,  2,  2
3,  3,  2,  3,  3,  5,  3,  3,  2,  3,  2,  2,  3,  3,  2,  3,  2,  7,  3,  3,  2,  3,  2,  2,  3, 55,  2,  3,  2,  2
3,  3,  2,  3,  2,  5,  3,  6,  2,  3,  2,  2,  3,  3,  2,  3,  3, 11,  3,  3,  2,  3,  2,  2,  3,  3,  2,  3,  2,  7
3,  3,  2,  3,  2,  5,  6,  6,  2,  3,  2,  2,  3,  7,  2,  3,  2,  2,  3,  3,  2,  2,  2,  2,  3,  5,  2,  7,  2,  2
3,  2,  2,  3,  2,  5,  3,  3,  2,  3,  2,  7,  3,  3,  2,  3,  3,  3,  3,  2,  2,  3,  2, 11,  3,  7,  2,  3,  2,  2
3,  3,  2,  3,  2,  5,  3,  3,  2,  3,  2,  2,  3,  3,  2,  3,  3,  3,  3,  3,  2,  3,  2,  7,  3,  3,  2,  3,  2,  2
3,  3,  2,  3,  2,  5,  2,  7,  2,  3,  2,  2,  3,  3,  2,  3,  2,  2,  3,  3,  2,  3,  2,  2,  2,  2,  2,  3,  2, 11
3,  3,  2,  3,  2,  5,  3,  3,  2,  3,  2,  2,  3,  3,  2,  3,  2,  2,  3,  7,  2,  3,  2,  2,  3,  3,  2,  3,  2,  2
3,  3,  2,  3,  2,  5,  3,  3,  2,  3,  2,  2,  2,  2,  2,  3,  2,  7,  3,  3,  2,  3,  2,  2,  3,  3,  2,  3,  2,  2
3,  7,  2,  3,  2,  5,  3,  3,  2,  3,  2,  2,  3,  3,  2,  3,  2,  2,  3,  3,  2,  3,  2,  2,  3,  3,  2,  3,  2,  7
3,  3,  2,  3,  2,  5,  3,  3,  2,  3,  2,  2,  3,  7,  2,  3,  2,  2,  3,  3,  2,  3,  2,  2,  3,  3,  2,  3,  2,  2
3,  3,  2,  3,  2,  5,  3,  3,  2,  3,  2, 11,  3,  3,  2,  3,  2,  2,  3,  3,  2,  3,  2,  2,  3,  7,  2,  3,  2,  2
3,  3,  2,  3,  2,  5,  3,  3,  2,  3,  2,  2,  3,  3,  2,  3,  2,  2,  3,  3,  2,  3,  2,  7,  3, 55,  2,  3,  2,  2
3,  3,  2,  3,  2,  5,  3,  7,  2,  3,  2,  2,  3,  3,  2,  3,  2, 11,  3,  3,  2,  3,  2,  2,  2,  2,  2,  3,  2,  2
3,  3,  2,  3,  2,  5,  3,  3,  2,  3,  2,  2,  3,  3,  2,  3,  2,  2,  3,  7,  2,  3,  2,  2,  3,  3,  2,  3,  2,  2
3,  3,  2,  3,  2,  5,  3,  3,  2,  3,  2,  2,  3,  3,  2,  3,  2,  7,  2,  2,  2,  3,  2, 11,  3,  3,  2,  3,  2,  2
3,  2,  2,  3,  2,  5,  2,  2,  2,  3,  2,  2,  2,  2,  2,  3,  3,  3,  3,  3,  5,  3,  7,  7,  3,  3,  2,  3,  2,  7
3,  3,  2,  3,  2,  5,  3,  3,  2,  3,  2,  2,  3,  7,  2,  3,  3,  3,  3,  3,  2,  3,  2,  2,  3,  3,  2,  3,  2, 11
3,  3,  2,  3,  7,  5,  3,  3,  2,  3, 55, 55,  2,  2,  2,  3,  2,  2,  3,  3,  2,  3,  2,  2,  3,  7,  2,  3,  2,  2
3,  3,  2,  3,  2,  5,  3,  3,  2,  3,  2,  2,  3,  3,  2,  3,  2,  2,  3,  3,  2,  3,  2,  7,  3,  2,  2,  3,  2,  2
3,  3,  2,  3,  2,  5,  3,  7,  2,  3,  2,  2,  3,  3,  2,  3,  2,  2,  3,  2,  2,  3,  2,  2,  3,  3,  2,  3,  2,  2
3,  3,  2,  3,  2,  5,  6,  6,  2,  3,  2,  2,  3,  3,  2,  3,  3,  3,  3,  7,  2,  3,  2,  2,  3,  3,  2,  3,  2,  2
3,  3,  2,  3,  2,  5,  2,  2,  2,  3,  2, 11,  3,  3,  2,  3,  2,  7,  2,  2,  2,  3,  2,  2,  3,  3,  2,  3,  2,  2
3,  2,  2,  3,  2,  5,  3,  3,  2,  3,  2,  2,  3,  3,  2,  3,  3,  3,  3,  3,  2,  3,  7,  7,  3, 55,  2,  3,  2,  7
3,  3,  2,  3,  2,  5,  3,  3,  2,  3,  2,  2,  3,  7,  2,  3,  2, 11,  3,  3,  2,  3,  2,  2,  3,  3,  2,  3,  2,  2
3,  3,  2,  3,  7,  5,  3,  3,  2,  3,  2,  2,  3,  3,  2,  3,  3,  3,  3,  3, 55,  3,  2,  2,  3,  7,  2,  3,  2,  2
3,  6,  2,  3,  2,  5,  3,  3,  2,  3,  2,  2,  3,  3,  2,  3,  2,  2,  3,  3,  2,  3,  2,  7,  3,  3,  2,  3,  2,  2
3,  3,  2,  3,  2,  5,  3,  7,  2,  3,  2,  2,  3,  3,  2,  3,  3,  3,  3,  3,  2,  3,  2,  2,  3,  3,  2,  3,  7,  7
3,  3,  2,  3,  2,  5,  3,  3,  2,  3,  2,  2,  3,  3,  2,  3,  3,  3,  3,  7,  2,  3,  2,  2,  3,  2,  2,  3,  2, 11
3,  3,  2,  3,  2,  5,  2,  2,  2,  3,  2,  2,  2,  2,  2,  3,  2,  7,  3,  3,  2,  3,  2,  2,  7,  7,  2,  3,  2,  2
3,  3,  2,  3,  2,  5,  3,  3,  2,  3,  2,  2,  3,  6,  2,  3,  3,  3,  3,  3,  2,  3,  7,  7,  3,  2,  2,  3,  2,  7
3,  3,  2,  3,  2, 55,  3,  3,  2,  3,  2,  2,  3,  7,  2,  3,  2,  2,  3,  3,  2,  3,  2,  2,  3,  3,  2,  3,  3,  3
3,  3,  2,  3,  2,  5,  3,  3,  2,  3,  2,  2,  3,  3,  2,  3,  2,  2,  3,  3,  2,  3,  2,  2,  3,  7,  2,  3,  2,  2
3,  3,  2,  3,  2,  5,  3,  3,  2,  3,  2, 11,  3,  3,  2,  3,  2,  2,  2,  2,  2,  3,  2,  2,  3,  3,  2,  3,  2,  2
3,  3,  2,  3,  2,  5,  3,  7,  2,  3,  2,  2,  3,  3,  2,  3,  2,  2,  3,  3,  2,  3,  2,  2,  2, 55,  2,  3,  2, 14
3,  3,  2,  3,  2,  5,  3,  3,  2,  3,  2,  2,  3,  3,  2,  3,  3, 11,  3,  7,  2,  3,  2,  2,  3,  2,  2,  3,  2,  2
3,  3,  2,  3,  2,  5,  3,  3,  2,  3,  2,  2,  3,  3,  2,  3,  2,  7,  3,  3,  2,  3,  2,  2,  3,  3,  2,  3,  2,  2
3,  3,  2,  3,  2,  5,  3,  3,  2,  3,  2,  2,  3,  3,  2,  3,  3,  3,  3,  3,  2,  3,  2, 11,  3,  3,  2,  3,  2,  7
3,  3,  2,  3,  2,  5,  3,  3,  2,  3,  2,  2,  3,  7,  2,  3,  3,  3,  3,  3,  2,  3,  2,  2,  3,  3,  2,  3,  2,  2
3,  3,  2,  3,  2,  5,  3,  3,  2,  3,  2,  2,  3,  3,  2,  3,  3,  3,  3,  3,  2,  3,  2,  2,  3,  7,  2,  3,  2, 11
3,  3,  2,  3,  2,  5,  3,  3,  2,  3, 55, 55,  3,  3,  2,  3,  2,  2,  3,  3,  2,  3,  2,  7,  3,  3,  2,  3,  2,  2
6,  3,  2,  3,  2,  5,  3,  7,  2,  3,  2,  2,  3,  3,  2,  3,  2,  2,  3,  3,  2,  3,  2,  2,  3,  3,  2,  3,  2,  2
3,  3,  2,  3,  2,  5,  3,  3,  2,  3,  2,  2,  3,  3,  2,  3,  2,  2,  3,  7,  2,  3,  2,  2,  2,  2,  2,  3,  2,  2
3,  3,  2,  3,  2,  5,  3,  3,  2,  3,  2,  2,  3,  3,  2,  3,  2,  7,  3,  3,  2,  3,  2,  2,  3,  3,  2,  3,  2,  2
3,  3,  2,  3,  2,  5,  2,  2,  2,  3,  2, 11,  3,  3,  2,  3,  3,  3,  3,  3,  2,  3,  2,  2,  3,  3,  2,  3,  2,  7
3,  3,  2,  3,  2,  5,  3,  3,  2,  3,  2,  2,  3,  7,  2,  3,  2,  2,  3,  3,  2,  3,  2,  2,  3, 55,  2,  3,  2,  2
3,  3,  2,  3,  2,  5,  3,  3,  2,  3,  2,  2,  2,  2,  2,  3,  3, 11,  3,  3,  2,  3,  2,  2,  3,  7,  2,  3,  2,  2
3,  3,  2,  3,  2,  5,  3,  3,  2,  3,  2,  2,  3,  3,  2,  3,  7,  7,  3,  3,  2,  3,  2,  2,  3,  3,  2,  3,  2,  2
3,  2,  2,  3,  2,  5,  3,  7,  2,  3,  2,  2,  3,  3,  2,  3,  2,  2,  3,  3,  2,  3,  2, 11,  3,  3,  2,  3,  2,  2
3,  3,  2,  3,  2,  5,  3,  3,  2,  3,  2,  2,  3,  3,  2,  3,  3,  3,  3,  7,  2,  3,  2,  2,  3,  3,  2,  3,  2,  2
3,  3,  2,  3,  2,  5,  3,  3,  2,  3,  2,  2,  3,  3,  2,  3,  2,  7,  3,  3,  2,  3,  2,  2,  3,  3,  2,  3,  2, 11
3,  2,  2,  3,  2,  5,  3,  3,  2,  3,  2,  2,  2,  2,  2,  3,  3,  3,  3,  3,  2,  3,  2,  2,  3,  3,  2,  3,  2,  7
3,  3,  2,  3,  2,  5,  3,  3,  2,  3,  2,  2,  3,  7,  2,  3,  3,  3,  3,  3,  2,  3,  2,  2,  3,  3,  2,  3,  2,  2
3,  3,  2,  3,  2,  5,  3,  3,  2,  3,  2,  2,  3,  3,  2,  3,  2,  2,  6,  6,  2,  3,  2,  2,  2,  7,  2,  3,  2,  2
3,  3,  2,  3,  2,  5,  3,  3,  2,  3,  2,  2,  3,  3,  2,  3,  2,  2,  2,  2,  2,  3,  2,  7,  3,  3,  2,  3,  2,  2)
\end{verbatim}
}
\end{center}

\end{document}